
\documentclass[11pt]{article}
\usepackage{amsmath,amstext,amssymb,amsthm, amsfonts}
\usepackage[title]{appendix}
\usepackage{cite}
\usepackage{color}
\setlength{\oddsidemargin}{0in} \setlength{\topmargin}{-.5in}
\setlength{\textheight}{9in} \setlength{\textwidth}{6.5in}
%\newcommand{\Pf}{\paragraph{{\bf Proof.}}}       % Starting a proof
%\newcommand{\blot}{\hfill{\vrule height .9ex width .8ex depth -.1ex }}
%\newcommand{\EndPf}{\hfill $\square$ \medskip}     % End of proof symbol
%\newcommand{\bi}{\begin{itemize}}  %begin itemize
%\newcommand{\ei}{\end{itemize}}     %end itemize
%\newcommand{\bc}{\begin{center}}  %begin center
%\newcommand{\ec}{\end{center}}     %end center
%
%\newcommand{\ls}[1]
%   {\dimen0=\fontdimen6\the\font \lineskip=#1\dimen0
%   \advance\lineskip.5\fontdimen5\the\font \advance\lineskip-\dimen0
%   \lineskiplimit=.9\lineskip \baselineskip=\lineskip
%   \advance\baselineskip\dimen0 \normallineskip\lineskip
%   \normallineskiplimit\lineskiplimit \normalbaselineskip\baselineskip
%   \ignorespaces }
%\renewcommand{\baselinestretch}{1.2}
%
%\numberwithin{equation}{section}

% Appendices
%%
%\renewcommand{\appendix}{\setcounter{section}{0}}\renewcommand{\thesection}{\Alph{section}}
%

%\vfuzz2pt % Don't report over-full v-boxes if over-edge is small
%\hfuzz2pt % Don't report over-full h-boxes if over-edge is small
%
%\newtheorem{lemma}{Lemma}[section]
%\newtheorem{theorem}[lemma]{Theorem}
%\newtheorem{corollary}[lemma]{Corollary}
%\newtheorem{definition}[lemma]{Definition}
%\newtheorem{proposition}[lemma]{Proposition}
%\newtheorem{example}[lemma]{Example}
%\newtheorem{Alg}[lemma]{Algorithm}
%\newtheorem{remark}[lemma]{Remark}
%\newtheorem{Assumption}[lemma]{Assumption}

\newtheorem{theorem}{Theorem}[section]
\newtheorem{lemma}[theorem]{Lemma}

\newtheorem{definition}[theorem]{Definition}
\newtheorem{assumption}[theorem]{Assumption}
\newtheorem{example}[theorem]{Example}
\newtheorem{proposition}[theorem]{Proposition}
\newtheorem{corollary}[theorem]{Corollary}

\theoremstyle{remark}
\newtheorem{remark}[theorem]{Remark}

\numberwithin{equation}{section}

% Symbols
\def\slim{\mathop{\rm lim\,sup}}
\def\ilim{\mathop{\rm lim\,inf}}
\def\R{\mathbf{R}}
\def\olR{\overline{\R}}
\def\SS{\mathbb{S}} 		% Measurable space
\def\SSig{\Sigma} 		% Measurable space
\def\M{\mathcal{M}} 		% set of finite measures
\def\h{\mathbf{I}}		% indicator
\def\S{\mathbb{S}}				% Metric space
\def\P{\mathbf{P}} 		%\def\P{\mathbb{P}}
\def\B{\mathcal{B}}
\def\ee{\varepsilon}

	% argmin

% MDP
%\def\X{\mathbb{X}}
%\def\A{\mathbb{A}}
%\def\K{\mathbb{K}}
%\def\U{\mathbb{U}}
%\def\a{\alpha}				% discount factor
%\def\K{\mathbb{K}}			% field K
%\def\F{\mathbb{F}}			% stationary policy
%\def\PS{\Pi}					% policy space
%\def\PR{P}					% probability
%\def\tu{\tilde{u}}			% limit of relative value function
%
% inventory control
%\def\c{\bar{c}}				% ordering cost per unit
%\def\Z{\mathbf{Z}}			% field Z
%\def\N{\mathbf{N}}			% field N
%\def\FF{\mathcal{F}}		

\begin{document}

%\title{Equivalence of Uniform and Asymptotic Uniform  Integrability Conditions and Its Applications}
%\title{Fatou's Lemma for Weakly Converging Probabilities under the Uniform Integrability Condition}
\title{Fatou's Lemma for Weakly Converging Measures under the Uniform Integrability Condition}

\date{}

\maketitle

\begin{center}
Eugene~A.~Feinberg\footnote{Department of Applied Mathematics and
Statistics,
 Stony Brook University,
Stony Brook, NY 11794-3600, USA, eugene.feinberg@sunysb.edu},\
Pavlo~O.~Kasyanov\footnote{Institute for Applied System Analysis,
National Technical University of Ukraine ``Igor Sikorsky Kyiv Polytechnic
Institute'', Peremogy ave., 37, build, 35, 03056, Kyiv, Ukraine,\
kasyanov@i.ua.},\ and Yan~Liang\footnote{Department of Applied Mathematics and
Statistics,
 Stony Brook University,
Stony Brook, NY 11794-3600, USA, yan.liang@stonybrook.edu}
\end{center}

%    Abstract is required.
\begin{abstract}
 This note describes Fatou's lemma %and Lebesgue's  dominated convergence theorem
 for a sequence of measures converging weakly to a finite measure and for a sequence of functions  whose negative parts are uniformly integrable  with respect to these measures.  The note also provides new formulations of uniform Fatou's lemma, uniform Lebesgue convergence theorem, the Dunford-Pettis theorem, and the fundamental theorem for Young measures based on the equivalence of uniform integrability and the apparently weaker property of asymptotic uniform integrability  for sequences of functions and finite measures.
\end{abstract}

\maketitle

%%%%%%%%%%%%%%%%%%%%%%%%%%%%%%%%%%%%%%%%%%%%%%%%%%%%%%%%%%%%

\section{Introduction}\label{ZKsec:1}
%{\color{blue}
Fatou's lemma states under appropriate conditions that the integral of the lower limit
of a sequence of functions is not greater than the lower limit of the integrals.
This inequality holds under one of the following conditions:
{\rm (i)} each function is nonnegative; {\rm (ii)} the sequence of functions is bounded below by an  integrable function;
{\rm (iii)} the sequence of negative parts of the functions is  uniformly integrable; see Shiryaev~\cite[pp. 187, 211]{Shi96}.
Serfozo~\cite[Lemma~3.2]{Ser82} established Fatou's lemma for a sequence
of measures converging vaguely on a locally compact metric space and for nonnegative functions.
Feinberg~et~al.~\cite[Theorem~4.2]{FKZTVP} provided Fatou's lemma for a sequence
of measures converging weakly and for functions bounded below by a sequence of functions satisfying a certain minorant condition, which is satisfied for nonnegative functions.
In this note we establish Fatou's lemma for a sequence of measures converging weakly and for functions  whose negative parts satisfy the uniform integrability condition.

Uniform integrability of a family of functions plays an important role in probability theory and analysis.  The relevant notion is asymptotic uniform integrability of a sequence of random variables~\cite[p. 17]{Vaa98}.  In this note we introduce the definitions of uniformly integrable (u.i.) and asymptotically uniformly integrable (a.u.i.) functions with respect to (w.r.t.) to a sequence of finite measures, and we show that these definitions are equivalent.  This equivalence provides alternative formulations and proofs to some facts that use uniform integrability or asymptotic uniform integrability assumptions. For the case of a single probability measure, this equivalence is established in Kartashov~\cite[p. 180]{Kar08}.

 Fatou's lemmas for weakly converging measures have significant applications to various areas and fields including stochastic processes~\cite{Chen, DEH, Has, LLZ}, statistics~\cite{TJ, Thorpe, GTSA}, control~\cite{Chu, FH, app2FKZ, FKZMOR,  VA}, game theory~\cite{app5in}, functional analysis~\cite{Gon}, optimization~\cite{XLS}, and electrical engineering~\cite{RWJS}. Our initial motivation in studying Fatou's lemma for variable probabilities  was caused by its usefulness for the proof  of the validity of optimality inequalities and the existence of stationary optimal policies for infinite-horizon, Borel-state, average-cost Markov decision process with noncompact action sets and unbounded costs \cite{app2FKZ}.  These results have significant applications to inventory control \cite{Fei,FL}.

Other versions of Fatou's lemmas for variable measures are also important for applications.  The recently discovered uniform Fatou lemma and uniform Lebesgue convergence theorems \cite{FKZ16} play the central role in establishing sufficient optimality conditions for partially observable Markov decision processes with Borel state and action spaces \cite{FKZMOR}.  Unlike the classic Fatou's lemma, which provides sufficient conditions for Fatou's inequality, the uniform Fatou lemma states necessary and sufficient condition for the uniform version of Fatou's inequality.  If all the functions are absolutely integrable, these necessary and sufficient conditions are more general than the conditions in the classic Fatou lemma.  One of two necessary conditions in the uniform Fatou lemma \cite[Theorem 2.1]{FKZ16} is that the sequence of negative parts of functions is uniformly integrable with respect to the measures; see  \eqref{eq:tv:ui} below.  The central result of this paper, Theorem 2.4, states that this condition is sufficient for the validity of Fatou's inequality for weakly converging measures. The examples in Section~\ref{sec:counterexa}   show  that this condition and the sufficient condition in Assumption~\ref{as:minorant}, which was introduced in \cite{FKZTVP},  do not imply each other. In particular, Theorem 2.4 is useful for studying Markov decision processes and stochastic games with cost functions unbounded from above and from below; see \cite{DPR,FJN, JN} where such problems are studied.

%{\color{blue} Our follow-up paper [] investigates when Fatou's inequality holds for weakly converging measures in its standard form \eqref{eqfatouEF} rather than as \eqref{eq:def:weak}.  It also studies the counterparts of these results for setwise converging measures.  These two groups of results are used to derive general sufficient conditions for validity of optimality equations for infinite-horizon average-cost Markov decision processes.
%}

Fatou's lemma and Lebesgue's convergence theorems for probabilities are classic facts in probability theory.  However, their versions for finite measures are also important for probability theory and its applications.  This is the reason  this paper and \cite{FKZ16} study finite measures rather than probabilities. For example, the theory of optimization of Markov decision processes with multiple criteria is based on considering occupancy (also often called occupation) measures, which typically are not probability measures~\cite{Bor}. Another example is \cite{LLZ}, where Fatou's lemma for nonnegative  functions and finite measures is used.

  Though uniform integrability and asymptotic uniform integrability properties of a sequence of functions with respect to a sequence of finite measures are equivalent, it is typically easier to verify asymptotic uniform integrability.  This  is important for applications.  For this reason we provide in Section~\ref{sec:uiaui} alternative formulations of the uniform Fatou lemma and Lebesgue convergence theorem from \cite{FKZ16} and two classic facts important for applications. In these formulations  uniform integrability is substituted with asymptotic uniform integrability.

Section~\ref{sec:fatou}  of this paper provides definitions, describes the equivalence of uniform integrability and  asymptotic uniform integrability,  and states Fatou's lemma  and Lebesgue's  dominated convergence theorem for weakly converging measures.
In particular, Fatou's lemma is  formulated in Section~\ref{sec:fatou} for weakly converging measures   and for a.u.i. sequences of functions, which is equivalent to the assumption that the sequence of functions is u.i.
Section~\ref{sec:counterexa}  illustrates with the examples that  the uniform integrability condition stated in Theorem~\ref{thm:Fatou:w:ui} neither implies nor is implied by the minorant condition;   see Assumption~\ref{as:minorant} and Corollary~\ref{cor:Fatou:w:var} below. %considered in Feinberg~et~al.~\cite[Theorem~4.2]{FKZTVP}, which is also a sufficient condition for the validity of Fatou's lemma for weakly converging measures.
Example~\ref{eq:sw2aaa:varinf} demonstrates that $\limsup$ in inequalities \eqref{eq:sw2aaa:var} in Assumption~\ref{as:minorant} cannot be relaxed to $\liminf.$
By making use of the equivalence of uniform integrability and asymptotic uniform integrability,
Section~\ref{sec:uiaui} provides alternative formulations of uniform Fatou's lemma, uniform Lebesgue's  dominated convergence theorem, the Dunford-Pettis theorem, and Ball's fundamental theorem for Young measures.
%Section~\ref{sec:uiaui} {\color{blue} contains} the proof of the equivalence of uniform integrability and asymptotic uniform integrability{\color{blue},  and it also provides} alternative formulations of uniform Fatou's lemma, uniform Lebesgue's convergence theorem, the Dunford-Pettis theorem, and Ball's fundamental theorem for Young measures.
Section~\ref{sec:proof thm ui ass} provides the proofs of Fatou's lemma for weakly converging measures  and relevant statements formulated in Section~\ref{sec:fatou}.

\section{ Main results}\label{sec:fatou} %formulation

Let $(\SS,\SSig)$ be a measurable space, $\M (\SS)$ be the {\it family of all %nonnegative
finite measures on} $(\SS,\SSig),$ and $\P(\SS)$ be the {\it family of all probability measures} on
$(\SS,\SSig).$ When $\SS$ is a topological space, as a rule, we consider
$\SSig := \B(\SS),$ where $\B(\SS)$ is the {\it Borel $\sigma$-field} on $\SS.$
Let $\R$ be the real line  and $\olR := [-\infty,+\infty].$ %and
The notation $\h A$ denotes the indicator of the event $A.$

Throughout this paper, we deal with integrals of functions
that can take both positive and negative values.  An \textit{integral} $\int_\SS f (s)\mu(ds)$
of a measurable $\overline{\R}$-valued function $f$ on $\SS$ with
respect to a measure $\mu$ is \textit{defined} and equal to
\[\int_\SS f
(s)\mu(ds)=\int_\SS f^+ (s)\mu(ds)- \int_\SS f^- (s)\mu(ds),\]
if
\[
\min\{ \int_\SS f^+(s)\mu(ds),
\int_\SS f^-(s) \mu(ds)\}< \infty,
\]
where $f^+(s)=\max\{f(s),0\},$ $f^-(s)=-\min\{f(s),0\},$ $s\in \SS.$ All the
integrals in the assumptions of the theorems and
corollaries throughout this paper are assumed to be defined.

\begin{definition}\label{defi:unif integr}
The sequence of measurable $\olR$-valued functions $\{f_n\}_{n=1,2,\ldots}$ is called
\begin{itemize}
\item {\it uniformly integrable (u.i.) w.r.t. {{a} sequence of measures} $\{\mu_n\}_{n=1,2,\ldots}\subset \M (\SS)$} if
\begin{align}
		\lim_{K\to+\infty} \sup_{n=1,2,\ldots} \int_\SS |f_n (s)|
		\h \{ s\in\SS : |f_n (s)| \geq K\} \mu_n (ds) = 0 ;
		\label{eq:tv:ui}
	\end{align}
\item {\it asymptotically uniformly integrable (a.u.i.) w.r.t. {{a} sequence of measures} $\{\mu_n\}_{n=1,2,\ldots}\subset \M (\SS)$} if  %(cp. Vaart~\cite[p. 17]{Vaa98}) if
\begin{align}
		\lim_{K\to+\infty} \slim_{n\to\infty} \int_\SS |f_n (s)|
		\h \{ s\in\SS : |f_n (s)| \geq K\} \mu_n (ds) = 0.
		\label{eq:tv:ui:lim}
	\end{align}
\end{itemize}
\end{definition}
 We remark  that the limit as $K\to+\infty$ in \eqref{eq:tv:ui} (\eqref{eq:tv:ui:lim})
exists because the function
\begin{align}
K\mapsto &\sup_{n=1,2,\ldots} \big(\slim_{n\to\infty} \big) \int_\SS |f_n (s)|
		\h \{ s\in\SS : |f_n (s)| \geq K\} \mu_n (ds)
\end{align}
is nonincreasing in $K>0.$

If $\mu_n=\mu\in\M(\SS)$ for each $n=1,2,\ldots,$ then an (a.)u.i.~w.r.t. $\{\mu_n\}_{n=1,2,\ldots}$
sequence $\{f_n\}_{n=1,2,\ldots}$ is called (\textit{a.})\textit{u.i.}
For a single finite measure $\mu,$ the definition of an a.u.i.~sequence of functions
(random variables in the case of a probability measure $\mu$) coincides with the corresponding
definition broadly used in the literature; see, e.g., \cite[p. 17]{Vaa98}. Also, for a single fixed
finite measure, the definition of a u.i. sequence of functions is consistent with the classic
definition of a family $\cal H$ of u.i. functions.
%The only difference is that, in the case of a family $\cal H$ of functions, the supremum in  \eqref{eq:tv:ui} should be taken over the entire family $\cal H.$
%If $f_n=f$ for all $n=1,2,\ldots,$ then
We say that a function $f$ is (a.)u.i.~w.r.t.
$\{\mu_n\}_{n=1,2,\ldots}$ if the sequence $\{f,f,\ldots\}$ is  (a.)u.i.~w.r.t.
$\{\mu_n\}_{n=1,2,\ldots}.$ A function $f$ is  u.i.~w.r.t. a family  $ \mathcal{N} $
of measures if
\begin{align*}
	\lim_{K\to+\infty} \sup_{\mu\in  \mathcal{N}  }  \int_\SS |f (s)|
	\h \{ s\in\SS : |f (s)| \geq K\} \mu (ds) = 0.
\end{align*}

The following theorem states
the equivalence of the uniform and asymptotic uniform integrability properties introduced in Definition~\ref{defi:unif integr}.
The proof of Theorem~\ref{thm:uiCondEqui} is presented in the Appendix.
Several examples of applications of Theorem~\ref{thm:uiCondEqui} are provided in Section~\ref{sec:uiaui}. As mentioned in the Introduction, for $\mu_n=\mu$ with $\mu$ being a probability measure, $n=1,2,\ldots,$  Theorem~\ref{thm:uiCondEqui} is presented in Kartashov~\cite[p. 180]{Kar08}.

\begin{theorem}[{cp. Kartashov~\cite[p. 180]{Kar08}}]\label{thm:uiCondEqui}
Let $(\SS, \SSig)$ be a measurable space, $\{\mu_n\}_{n=1,2,\ldots}$ be a sequence of measures from $\M (\SS),$ and
$\{ f_n \}_{n=1,2,\ldots}$ be a sequence of measurable
$\olR$-valued functions on $\SS.$ Then there exists $ N=0,1,\ldots$ such that $\{f_{n+   N  }\}_{n=1,2,\ldots}$ is u.i.~w.r.t. $\{\mu_{n+  N  }\}_{n=1,2,\ldots}$ if and only if $\{f_n\}_{n=1,2,\ldots}$ is a.u.i.~w.r.t. $\{\mu_n\}_{n=1,2,\ldots}.$
\end{theorem}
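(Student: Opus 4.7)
The plan is to abbreviate $g_n(K):=\int_\SS |f_n(s)|\,\h\{s\in\SS:|f_n(s)|\geq K\}\,\mu_n(ds)$, so that u.i.\ of $\{f_{n+N}\}_{n\geq 1}$ w.r.t.\ $\{\mu_{n+N}\}_{n\geq 1}$ reads $\lim_{K\to\infty}\sup_{n\geq 1}g_{n+N}(K)=0$, while a.u.i.\ of $\{f_n\}$ w.r.t.\ $\{\mu_n\}$ reads $\lim_{K\to\infty}\limsup_{n\to\infty}g_n(K)=0$. The easy direction (existence of such $N$ implies a.u.i.) is then immediate, since $\limsup_{n\to\infty}g_n(K)\leq\sup_{m\geq 1}g_{m+N}(K)$ and the right-hand side vanishes as $K\to\infty$.

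For the main direction, I would first extract from a.u.i.\ an index $N_0$ such that $f_n$ is integrable w.r.t.\ $\mu_n$ for every $n>N_0$. Indeed, choosing $K_0$ with $\limsup_{n\to\infty}g_n(K_0)<1$ gives $N_0$ with $g_n(K_0)<1$ for all $n>N_0$, whence $\int_\SS |f_n|\,d\mu_n\leq K_0\mu_n(\SS)+g_n(K_0)<\infty$ because $\mu_n$ is finite; by dominated convergence, $g_n(K)\to 0$ as $K\to\infty$ for each such $n$. I then propose to take $N:=N_0$. Given $\ee>0$, a.u.i.\ supplies $K_\ee$ and $M_\ee\geq N$ with $g_n(K_\ee)<\ee$ for all $n>M_\ee$, and monotonicity of $g_n$ in $K$ upgrades this to $g_n(K)<\ee$ for $n>M_\ee$ and $K\geq K_\ee$. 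On the finite intermediate block $\{n:N<n\leq M_\ee\}$, the individual integrability established above furnishes $K'_\ee$ with $g_n(K)<\ee$ for all such $n$ and $K\geq K'_\ee$. Combining the two estimates gives $\sup_{n>N}g_n(K)\leq\ee$ for $K\geq\max(K_\ee,K'_\ee)$, which proves $\lim_{K\to\infty}\sup_{n>N}g_n(K)=0$.

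The main obstacle is that the cutoff $M_\ee$ furnished by a.u.i.\ genuinely depends on $\ee$, so one cannot simply set $N=\sup_\ee M_\ee$, which could be infinite. The resolution is that a single, $\ee$-independent $N$ still works provided it is large enough to enforce integrability of every $f_n$ with $n>N$: the supremum $\sup_{n>N}g_n(K)$ then decomposes into a tail $n>M_\ee$ controlled uniformly by a.u.i.\ and a \emph{finite} middle block $\{N<n\leq M_\ee\}$ whose finitely many $g_n(K)$ each decay to $0$ in $K$ by dominated convergence. This two-region split is the key idea that lets the $\ee$-dependent threshold $M_\ee$ be absorbed into a single, $\ee$-independent $N$.
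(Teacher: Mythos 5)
Your proof is correct and follows essentially the same route as the paper's: you first extract from asymptotic uniform integrability an index $N$ beyond which each $f_n$ is $\mu_n$-integrable, and then upgrade the $\limsup$ in \eqref{eq:tv:ui:lim} to a supremum over $n>N$ by splitting into a finite middle block, where the individual decay of $\int_\SS |f_n(s)|\,\h\{s\in\SS:|f_n(s)|\ge K\}\,\mu_n(ds)$ in $K$ suffices, and a tail controlled by a.u.i. The only organizational difference is that the paper encapsulates this last upgrade in Lemma~\ref{lm:Kar}, proved by contradiction, while you carry it out directly with an $\varepsilon$-argument; the underlying ingredients (monotonicity in $K$ and integrability of $f_n$ w.r.t.\ $\mu_n$ for all large $n$) are identical.
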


We recall that Fatou's lemma claims that for a sequence of nonnegative measurable functions $\{f_n\}_{n=1,2,\ldots}$ defined on a measurable space $(\SS,\SSig)$  and for a measure $\mu$ on this space
\begin{equation}\label{eqfatouEF}
 \int_\SS \liminf_{n\to\infty} f_n(s)\mu(ds)\le\liminf_{n\to\infty} \int_\SS f_n(s)\mu(ds).
\end{equation}
%This inequality may not hold, if the functions $f_n$ can take negative values, but it holds under certain sufficient conditions.  In particular it holds if $\mu\in{\cal M}(\SS),$ the integral in the right-hand side of \eqref{eqfatouEF} is defined, and the sequence  $\{f^-_n\}_{n=1,2,\ldots}$ is u.i.  \cite[p. 211]{Shi96}.
Although a sequence of functions is u.i. if and only if it is a.u.i., in  many cases it is easier to verify that the sequence of functions is a.u.i. than that it is u.i.

\begin{definition}\label{def:wc}
	A sequence of measures $\{ \mu_n \}_{n=1,2,\ldots}$ on a metric space $\S$
	\textit{converges weakly} to a finite measure $\mu$ on $\S$
	if for each bounded continuous function $f$ on $\S$
\begin{align}
	\lim_{n\to\infty}\int_\S f(s) \mu_n (ds) = \int_\S f(s) \mu (ds) .
	\label{eq:def:weak}
\end{align}
\end{definition}

The following  theorem %Theorem~\ref{thm:Fatou:w:ui}
is the main result of this section. We provide the proof of this theorem in Section~\ref{sec:proof thm ui ass}.
\begin{theorem}[Fatou's lemma for weakly converging measures]\label{thm:Fatou:w:ui}
	Let $\S$ be a metric space, $\{\mu_n\}_{n=1,2,\ldots}$ be a sequence of measures on $\S$ converging weakly to $\mu\in \M (\S),$ and
	$\{ f_n \}_{n=1,2,\ldots}$ be a sequence of measurable
	$\olR$-valued functions on $\S$ such that $\{ f_n^- \}_{n=1,2,\ldots}$ is a.u.i.~w.r.t. $\{\mu_n\}_{n=1,2,\ldots}.$
Then
	\begin{align}\label{eq:lf}
		\int_\S \ilim\limits_{n\to\infty,\, s'\to s} f_n (s')\mu(ds)
		\le \ilim\limits_{n\to\infty}\int_\S f_n (s)\mu_n (ds).
	\end{align}
\end{theorem}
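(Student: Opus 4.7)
The plan is to truncate from below at level $-K$, apply the Fatou lemma for weakly converging measures in the nonnegative case to the shifted functions, and then send $K\to+\infty$ using the uniform integrability of $\{f_n^-\}$ to control the truncation error. To begin, Theorem~\ref{thm:uiCondEqui} applied to $\{f_n^-\}$ lets me assume, after discarding a finite initial block that affects neither side of \eqref{eq:lf}, that $\{f_n^-\}$ is in fact u.i.\ w.r.t.\ $\{\mu_n\}_{n=1,2,\ldots}$; define
\[
\varepsilon(K):=\sup_{n\ge 1}\int_\S f_n^-(s)\,\h\{s\in\S:f_n^-(s)\ge K\}\,\mu_n(ds),\qquad\varepsilon(K)\to 0\text{ as }K\to+\infty.
\]

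For each $K>0$, put $f_n^K(s):=\max\{f_n(s),-K\}$, so that $g_n^K:=f_n^K+K\ge 0$. By Fatou's lemma for weakly converging measures applied to the nonnegative sequence $\{g_n^K\}_{n=1,2,\ldots}$ (see, e.g., Feinberg et al.~\cite[Theorem~4.2]{FKZTVP}), and using $\mu_n(\S)\to\mu(\S)$ (obtained by testing weak convergence against the constant $1$), the additive $K$-terms cancel and I obtain the truncated inequality
\[
\int_\S \ilim\limits_{n\to\infty,\,s'\to s} f_n^K(s')\,\mu(ds)\le \ilim\limits_{n\to\infty}\int_\S f_n^K(s)\,\mu_n(ds).
\]

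It remains to remove the truncation. Pointwise $f_n^K\ge f_n$, hence the left-hand side of the truncated inequality dominates $\int_\S \ilim_{n\to\infty,\,s'\to s} f_n(s')\,\mu(ds)$. For the right-hand side, the elementary bound
\[
0\le f_n^K(s)-f_n(s)=\max\{0,-K-f_n(s)\}\le f_n^-(s)\,\h\{s\in\S:f_n^-(s)\ge K\}
\]
yields $\int_\S(f_n^K-f_n)\,d\mu_n\le\varepsilon(K)$ for every $n$, whence $\ilim_n\int_\S f_n^K\,d\mu_n\le \ilim_n\int_\S f_n\,d\mu_n+\varepsilon(K)$. Chaining these inequalities through the truncated inequality and letting $K\to+\infty$ produces \eqref{eq:lf}. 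The main obstacle is the uniform-in-$n$ control of the truncation error: the a.u.i.\ assumption on its own only controls it asymptotically in $n$, which is precisely why Theorem~\ref{thm:uiCondEqui} is invoked at the outset to upgrade the hypothesis to its stronger uniform form and keep the argument transparent.
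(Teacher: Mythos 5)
Your proof is correct and follows essentially the same route as the paper's: truncate the functions below at level $-K$, apply \cite[Theorem~4.2]{FKZTVP} (Fatou's lemma for weakly converging probabilities) to the resulting bounded-below sequence, and let $K\to+\infty$ using the uniform integrability of $\{f_n^-\}$ to kill the truncation error. The only differences are cosmetic --- you cap via $\max\{f_n,-K\}$ where the paper multiplies by $\h\{f_n>-K\}$, and you upgrade a.u.i.\ to u.i.\ at the outset via Theorem~\ref{thm:uiCondEqui} rather than invoking the a.u.i.\ definition at the final limit --- except that \cite[Theorem~4.2]{FKZTVP} is stated for probability measures, so you should add the paper's half-line normalization $\tilde\mu_n:=\mu_n/\mu_n(\S)$, $\tilde\mu:=\mu/\mu(\S)$ (handling the trivial case $\mu(\S)=0$ separately) before citing it.
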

Consider the following assumption introduced in \cite{FKZTVP}, which is a sufficient condition for the validity of Fatou's lemma for weakly converging measures.
\begin{assumption}\label{as:minorant}
Let $\S$ be a metric space, $\{\mu_n\}_{n=1,2,\ldots}$ be a sequence of measures on $\S$ that  converges weakly to $\mu\in \M (\S),$ and
$\{ f_n, g_n \}_{n=1,2,\ldots}$ be a sequence of measurable
$\olR$-valued functions on $\SS$ such that $f_n(s)\ge g_n(s)$ for each $n=1,2,\ldots$ and $s\in\S,$	
and
	\begin{align}\label{eq:sw2aaa:var}
		-\infty<\int_\S\slim_{n\to\infty,s^\prime\to s} g_n(s^\prime)\mu(ds)
		\le\ilim_{n\to\infty}\int_\S g_n(s)\mu_n(ds).
	\end{align}
\end{assumption}

We note that Assumption~\ref{as:minorant} implies under certain conditions that the sequence of functions $\{ f_n^- \}_{n=1,2,\ldots}$ is u.i.~w.r.t. $\{\mu_n \}_{n=1,2,\ldots}$; see Theorem~\ref{thm:var->ui} below. In  general, these two conditions do not imply each other; see Examples~\ref{ex:ui>var} and \ref{ex:ui<var}. The following theorem, whose proof is provided in Section~\ref{sec:proof thm ui ass}, describes a sufficient condition when uniform integrability is more general than Assumption~\ref{as:minorant}. In addition, according to Example~\ref{ex:ui>var},  these two assumptions are not equivalent under the sufficient condition stated in Theorem~\ref{thm:var->ui}.

\begin{theorem}\label{thm:var->ui}
Let Assumption~\ref{as:minorant} hold. If the sequence of functions $\{ g_n \}_{n=1,2,\ldots}$
is uniformly bounded from above, then there exists $N=  0,1,\ldots $ such that $\{f_{n+N}^-\}_{n=1,2,\ldots}$ is u.i. w.r.t. $\{\mu_{  n+N  }\}_{n=1,2,\ldots}.$
\end{theorem}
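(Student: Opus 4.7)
The plan is to prove asymptotic uniform integrability (a.u.i.) of $\{f_n^-\}_{n\geq 1}$ w.r.t.\ $\{\mu_n\}$, which by Theorem~\ref{thm:uiCondEqui} is equivalent to the existence of the claimed $N$. Since $f_n\geq g_n$ forces $f_n^-\leq g_n^-$ pointwise, hence $\{f_n^-\geq K\}\subseteq\{g_n^-\geq K\}$ and the corresponding tail integrals obey the same inequality, it suffices to establish a.u.i.\ for $\{g_n^-\}$. Fix $M\in\R$ with $g_n\leq M$ for all $n$. For every $K>0$ the set $\{g_n^-\geq K\}$ coincides with $\{g_n\leq -K\}\subseteq\{g_n<0\}$, so the target reduces to
\[
\lim_{K\to\infty}\slim_{n\to\infty}\int_\S(-g_n)\,\h\{g_n\leq -K\}\,d\mu_n = 0.
\]

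The first step is a truncation comparison. Set $g_n^K:=g_n\vee(-K)$, so that $g_n^K-g_n=(-K-g_n)\,\h\{g_n<-K\}\geq 0$. On $\{g_n\leq -2K\}$ one has $-g_n\geq 2K$, whence $-K-g_n\geq -g_n/2$, giving
\[
\int_\S(-g_n)\,\h\{g_n\leq -2K\}\,d\mu_n \;\leq\; 2\int_\S\bigl(g_n^K-g_n\bigr)\,d\mu_n.
\]
It is therefore enough to drive $\slim_n\int_\S(g_n^K-g_n)\,d\mu_n\to 0$ as $K\to\infty$.

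The main step is an ``upper Fatou'' estimate for $\{g_n^K\}$. Apply Theorem~\ref{thm:Fatou:w:ui} to the sequence $\{-g_n^K\}$, whose negative parts $(-g_n^K)^-=g_n^K\vee 0\leq M^+$ are uniformly bounded, hence trivially a.u.i.\ w.r.t.\ $\{\mu_n\}$. Using the identity
\[
\slim_{n\to\infty,\,s'\to s}\bigl(g_n(s')\vee(-K)\bigr) = \bar g(s)\vee(-K), \qquad \bar g(s):=\slim_{n\to\infty,\,s'\to s}g_n(s'),
\]
which holds because $\max(\cdot,-K)$ commutes with suprema and infima against the constant $-K$, the conclusion of Theorem~\ref{thm:Fatou:w:ui} rearranges to
\[
\slim_{n\to\infty}\int_\S g_n^K\,d\mu_n \;\leq\; \int_\S\bigl(\bar g\vee(-K)\bigr)\,d\mu.
\]
Combined with $\ilim_n\int_\S g_n\,d\mu_n\geq\int_\S\bar g\,d\mu>-\infty$ from Assumption~\ref{as:minorant}, this yields
\[
\slim_{n\to\infty}\int_\S(g_n^K-g_n)\,d\mu_n \;\leq\; \int_\S\bigl((\bar g\vee(-K))-\bar g\bigr)\,d\mu.
\]

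The conclusion follows by sending $K\to\infty$. The integrand $(\bar g\vee(-K))-\bar g$ is nonnegative, monotonically decreasing in $K$, and vanishes pointwise on $\{\bar g>-\infty\}$, a set of full $\mu$-measure because $\int_\S\bar g\,d\mu>-\infty$ together with $\bar g\leq M$ forces $\bar g^-$ to be $\mu$-integrable; moreover the integrand is dominated by $\bar g^-$, so dominated convergence delivers $\int_\S((\bar g\vee(-K))-\bar g)\,d\mu\to 0$. Chaining the estimates yields the displayed limit, hence a.u.i.\ of $\{g_n^-\}$ and of $\{f_n^-\}$, after which Theorem~\ref{thm:uiCondEqui} supplies the desired $N$. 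The principal obstacle is the upper Fatou estimate for $g_n^K$: Theorem~\ref{thm:Fatou:w:ui} gives a lower bound on a $\ilim$ of integrals, so to bound the $\slim$ from above one must apply it to $\{-g_n^K\}$ and correctly identify the pointwise upper limit $\slim_{n,s'\to s}(g_n(s')\vee(-K))$ with $\bar g(s)\vee(-K)$; the uniform upper bound on $g_n$ enters precisely here, by guaranteeing that the required a.u.i.\ hypothesis for $\{(-g_n^K)^-\}$ holds.
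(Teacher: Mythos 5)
Your proof is correct and follows essentially the same route as the paper's: reduce to a.u.i.\ of $\{g_n^-\}$ (hence of $\{f_n^-\}$), apply Theorem~\ref{thm:Fatou:w:ui} to a negated truncation of $g_n$ bounded below by $-K$ (which is where the uniform upper bound on $g_n$ enters), combine with the second inequality in \eqref{eq:sw2aaa:var}, and pass to the limit as $K\to+\infty$, finishing with Theorem~\ref{thm:uiCondEqui}. The only difference is cosmetic: you truncate via $g_n\vee(-K)$ and invoke dominated convergence for the limit in $K$, whereas the paper truncates via $g_n\h\{s:g_n(s)>-K\}$ and verifies the corresponding monotone convergence of the upper limits directly.
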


For weakly converging probability measures, Fatou's lemma is introduced  in \cite{Ser82} and is generalized in \cite[Theorem~4.2]{FKZTVP}.
The following corollary extends \cite[Theorem~4.2]{FKZTVP} to finite measures. The proof of Corollary~\ref{cor:Fatou:w:var} is provided in Section~\ref{sec:proof thm ui ass}.
 Example~\ref{exa:to as25} demonstrates that Corollary~\ref{cor:Fatou:w:var} is incorrect if Assumption~\ref{as:minorant} is weakened by replacing $\limsup$  with $\liminf$ in formula \eqref{eq:sw2aaa:var}.
\begin{corollary}[Fatou's lemma for weakly converging measures; {cp.~\cite[Theorem~4.2]{FKZTVP}}]\label{cor:Fatou:w:var}
Inequality \eqref{eq:lf} holds under Assumption~\ref{as:minorant}.
\end{corollary}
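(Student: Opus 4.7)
The plan is to reduce Corollary~\ref{cor:Fatou:w:var} to Theorem~\ref{thm:Fatou:w:ui} by applying the latter to the nonnegative auxiliary sequence $h_n := f_n - g_n.$ Since $h_n \ge 0,$ its negative part $h_n^-$ vanishes identically, so $\{h_n^-\}_{n=1,2,\ldots}$ is trivially a.u.i.~w.r.t.~$\{\mu_n\}_{n=1,2,\ldots}.$ Theorem~\ref{thm:Fatou:w:ui} applied to $\{h_n\}$ then yields
\begin{align*}
\int_\S \ilim_{n\to\infty,\, s'\to s} h_n(s')\, \mu(ds) \le \ilim_{n\to\infty}\int_\S h_n(s)\, \mu_n(ds),
\end{align*}
and the rest of the argument is to translate this inequality back into \eqref{eq:lf}.

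Before doing so, I would first dispose of the trivial cases. Set $A := \int_\S \slim_{n\to\infty,\, s'\to s} g_n(s')\, \mu(ds);$ by \eqref{eq:sw2aaa:var}, $A > -\infty.$ If $A = +\infty,$ then \eqref{eq:sw2aaa:var} forces $\ilim_n \int g_n\, d\mu_n = +\infty,$ and the pointwise bound $f_n \ge g_n$ gives $\ilim_n \int f_n\, d\mu_n = +\infty,$ so \eqref{eq:lf} is trivial; the analogous argument disposes of the case $\ilim_n \int f_n\, d\mu_n = +\infty.$ In the remaining case $A \in \R,$ the finiteness of $\mu$ forces $\slim_{n,s'\to s} g_n(s') \in \R$ for $\mu$-a.e.~$s,$ and a routine subsequencing step (using $f_n \ge g_n$ on one side and $\ilim_n \int g_n\, d\mu_n \ge A > -\infty$ on the other) lets me assume $\int g_n\, d\mu_n \in \R$ for every $n.$

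Now I combine the Fatou inequality for $\{h_n\}$ with two standard manipulations. The pointwise inequality
\begin{align*}
\ilim_{n\to\infty,\, s'\to s}(f_n - g_n)(s') \ge \ilim_{n\to\infty,\, s'\to s} f_n(s') - \slim_{n\to\infty,\, s'\to s} g_n(s'),
\end{align*}
valid on the full-$\mu$-measure set where $\slim_{n,s'\to s} g_n(s') \in \R,$ integrates to $\int \ilim h_n\, d\mu \ge \int \ilim f_n\, d\mu - A.$ On the other side, linearity gives $\int h_n\, d\mu_n = \int f_n\, d\mu_n - \int g_n\, d\mu_n,$ and combining the real-analytic inequality $\ilim(a_n - b_n) \le \ilim a_n - \ilim b_n$ with \eqref{eq:sw2aaa:var} yields $\ilim \int h_n\, d\mu_n \le \ilim \int f_n\, d\mu_n - A.$ Chaining these two bounds through the Fatou inequality for $\{h_n\}$ and canceling the finite constant $A$ from both sides delivers \eqref{eq:lf}.

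The main obstacle is the $\pm\infty$ bookkeeping rather than any new idea. One must define $h_n$ unambiguously as an $\olR$-valued nonnegative function, setting $h_n := 0$ at points where both $f_n$ and $g_n$ equal $+\infty;$ justify the linearity identity $\int h_n\, d\mu_n = \int f_n\, d\mu_n - \int g_n\, d\mu_n$ under the standing assumption that each integral is defined; and verify that the various subtractions of liminf and limsup quantities are meaningful. The preliminary reductions above are precisely what neutralize all of these issues, so the proof reduces to a careful chase of inequalities.
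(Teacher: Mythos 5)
Your proposal is correct and follows essentially the same route as the paper: apply Theorem~\ref{thm:Fatou:w:ui} to the nonnegative sequence $\{f_n-g_n\}_{n=1,2,\ldots}$ (whose negative parts vanish, hence are trivially a.u.i.), then combine the superadditivity/subadditivity inequalities for $\liminf$ and $\limsup$ with Assumption~\ref{as:minorant} to cancel the finite term $\int_\S \slim_{n\to\infty,\,s'\to s} g_n(s')\mu(ds).$ The only difference is that you spell out the $\pm\infty$ bookkeeping that the paper leaves implicit, which is a reasonable addition but not a new idea.
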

%To illustrate that Theorem~\ref{thm:Fatou:w:ui} is a stronger fact than Corollary~\ref{cor:Fatou:w:var} we consider Example~\ref{ex:ui>var}, where $\{ f_n^- \}_{n=1,2,\ldots}$ %is u.i.~w.r.t. $\{\mu_n\}_{n=1,2,\ldots}$ but Assumption~\ref{as:minorant} does not hold. Therefore, \eqref{eq:lf} follows from Theorem~\ref{thm:Fatou:w:ui}.

The following corollary states Lebesgue's dominated convergence theorem for weakly converging measures. The similar statement
is provided in Serfozo~\cite[Theorem~3.5]{Ser82}  in the form of the necessary and sufficient condition for nonnegative functions and for locally compact spaces.  Though local compactness is not used in the proof of
\cite[Theorem~3.5]{Ser82}, there is a difference beween the cases of nonnegative and general functions.   If the functions can take both positive and negative values, the converse statement to Corollary~\ref{cor:leb:weak} does not hold.  This can be seen from Example~\ref{ex:ui<var}. %exampl
%Serfozo~\cite[Theorem~3.5]{Ser82} also provided additional equivalent conditions for Lebesgue's dominated convergence theorem.
%
\begin{corollary}[Lebesgue's  dominated convergence theorem for weakly converging measures; {cp.~\cite[Theorem~3.5]{Ser82}}]\label{cor:leb:weak}
Let $\S$ be a metric space, $\{\mu_n\}_{n=1,2,\ldots}$ be a sequence of measures on $\S$ that  converges weakly to
$\mu\in \M (\S),$ and $\{ f_n \}_{n=1,2,\ldots}$ be a sequence of measurable $\olR$-valued functions on $\S$ such that
$\lim_{n\to\infty,\, s'\to s} f_n (s')$ exists for $\mu$-a.e. $s\in\S.$ If $\{ f_n \}_{n=1,2,\ldots}$ is a.u.i.~w.r.t. $\{\mu_n\}_{n=1,2,\ldots},$ then

\begin{align}
	\label{eq:lth}\lim\limits_{n\to\infty}\int_\S  f_n (s)\mu_n (ds) = \int_\S \lim_{n\to\infty,\, s'\to s} f_n (s')\mu(ds) .
\end{align}
\end{corollary}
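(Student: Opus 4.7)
The plan is to derive \eqref{eq:lth} by a two-sided sandwich: apply Theorem~\ref{thm:Fatou:w:ui} once to $\{f_n\}_{n=1,2,\ldots}$ and once to $\{-f_n\}_{n=1,2,\ldots},$ then collapse the resulting inequalities using the $\mu$-a.e.~existence of the joint limit.

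First I would verify that both applications of Theorem~\ref{thm:Fatou:w:ui} are legitimate, i.e., that both $\{f_n^-\}_{n=1,2,\ldots}$ and $\{(-f_n)^-\}_{n=1,2,\ldots}=\{f_n^+\}_{n=1,2,\ldots}$ are a.u.i.~w.r.t. $\{\mu_n\}_{n=1,2,\ldots}.$ This reduction is immediate from the a.u.i.~hypothesis on $\{f_n\},$ since $f_n^\pm(s)\le |f_n(s)|$ pointwise and $\{s\in\S:|f_n^\pm(s)|\ge K\}\subseteq\{s\in\S:|f_n(s)|\ge K\},$ so
\[
\int_\S |f_n^\pm(s)|\,\h\{s\in\S:|f_n^\pm(s)|\ge K\}\mu_n(ds)\le \int_\S |f_n(s)|\,\h\{s\in\S:|f_n(s)|\ge K\}\mu_n(ds),
\]
and \eqref{eq:tv:ui:lim} for $\{f_n\}$ propagates to $\{f_n^\pm\}.$

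Applying Theorem~\ref{thm:Fatou:w:ui} to $\{f_n\}$ yields the lower bound
\[
\int_\S \ilim_{n\to\infty,\, s'\to s} f_n(s')\mu(ds)\le \ilim_{n\to\infty}\int_\S f_n(s)\mu_n(ds),
\]
while applying it to $\{-f_n\},$ together with the elementary identities $\ilim_n(-a_n)=-\slim_n a_n$ and $\ilim_{n,\,s'\to s}(-f_n(s'))=-\slim_{n,\,s'\to s}f_n(s'),$ yields the upper bound
\[
\slim_{n\to\infty}\int_\S f_n(s)\mu_n(ds)\le \int_\S \slim_{n\to\infty,\, s'\to s} f_n(s')\mu(ds).
\]

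Since $\lim_{n\to\infty,\, s'\to s} f_n(s')$ exists for $\mu$-a.e.~$s\in\S,$ the two iterated $\ilim$ and $\slim$ integrands coincide $\mu$-a.e.\ with this common limit, so the outside integrals of the sandwich agree, and the chain of inequalities collapses to the equality \eqref{eq:lth}. The only point that genuinely requires justification is the a.u.i.~reduction for $\{f_n^\pm\}$ above; the rest is a formal manipulation of $\ilim$ and $\slim,$ so I do not expect any substantive obstacle.
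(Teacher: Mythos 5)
Your proposal is correct and is essentially the paper's own proof, which simply applies Theorem~\ref{thm:Fatou:w:ui} to the sequences $\{f_n\}_{n=1,2,\ldots}$ and $\{-f_n\}_{n=1,2,\ldots}$ and uses the $\mu$-a.e.\ existence of the joint limit to collapse the resulting sandwich. Your explicit verification that the a.u.i.\ property of $\{f_n\}_{n=1,2,\ldots}$ passes to $\{f_n^{-}\}_{n=1,2,\ldots}$ and $\{f_n^{+}\}_{n=1,2,\ldots}$ is a detail the paper leaves implicit, and it is carried out correctly.
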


 \begin{proof} The corollary directly follows from Theorem~\ref{thm:Fatou:w:ui}, applied to the
sequences $\{ f_n \}_{n=1,2,\ldots}$ and  $\{- f_n \}_{n=1,2,\ldots}.$ \end{proof}

 The following assumption provides a sufficient condition for a sequence of measurable functions  $\{ f_n \}_{n=1,2,\ldots}$ to be u.i.~w.r.t. a sequence of finite measures $\{\mu_n\}_{n=1,2,\ldots}.$
 \begin{assumption}\label{as:majorant}
Let $\S$ be a metric space, $\{\mu_n\}_{n=1,2,\ldots}$ be a sequence of measures on $\S$ that  converges weakly to $\mu\in \M (\S),$ and
$\{ f_n, g_n \}_{n=1,2,\ldots}$ be a sequence of pairs of measurable
$\olR$-valued functions on $\SS$ such that $|f_n(s)|\le g_n(s)$ for each $n=1,2,\ldots$ and $s\in\S,$	
and
	\begin{equation}\label{eq:dcth suff ai1}
\slim_{n\to\infty}\int_\S g_n(s)\mu_n(ds)\le \int_\S\ilim_{n\to\infty,s^\prime\to s} g_n(s^\prime)\mu(ds)<+\infty.
\end{equation}
\end{assumption}
\begin{corollary}[Lebesgue's  dominated convergence theorem for weakly converging measures;  cp.{~\cite[Theorem~3.3]{Ser82}}]\label{cor:thleb:w:var}
	If Assumption~\ref{as:majorant} holds and $\lim_{n\to\infty,\, s'\to s} f_n (s')$ exists for $\mu$-a.e. $s\in\S,$ then equality \eqref{eq:lth} holds.
\end{corollary}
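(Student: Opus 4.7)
The plan is to reduce Corollary~\ref{cor:thleb:w:var} to Corollary~\ref{cor:leb:weak} by showing that Assumption~\ref{as:majorant} forces $\{f_n\}_{n=1,2,\ldots}$ to be a.u.i.~w.r.t.~$\{\mu_n\}_{n=1,2,\ldots}.$ The pointwise inequality $|f_n(s)|\mathbf{I}\{|f_n(s)|\ge K\}\le g_n(s)\mathbf{I}\{g_n(s)\ge K\},$ together with the observation $g_n\ge|f_n|\ge 0,$ reduces the task to verifying that $\{g_n\}$ itself is a.u.i.~w.r.t.~$\{\mu_n\}.$ Once that is in hand, Corollary~\ref{cor:leb:weak} applied to $\{f_n\}$ gives identity \eqref{eq:lth}.

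To obtain a.u.i.~of $\{g_n\},$ set $h(s):=\ilim_{n\to\infty,\,s'\to s} g_n(s').$ The first step is to apply Theorem~\ref{thm:Fatou:w:ui} to $\{g_n\}$ (the hypothesis is trivial because $g_n^-\equiv 0$) and to combine the resulting Fatou bound $\int_\S h\,d\mu\le\ilim_n\int_\S g_n\,d\mu_n$ with the upper estimate from \eqref{eq:dcth suff ai1} into a two-sided squeeze, concluding that $\int_\S g_n\,d\mu_n\to\int_\S h\,d\mu<+\infty.$ The second step is to rerun Fatou on the bounded, nonnegative truncations $g_n\wedge K$ for each fixed $K>0,$ using the continuity and monotonicity of $x\mapsto x\wedge K$ to identify $\ilim_{n,s'\to s}(g_n(s')\wedge K)=h(s)\wedge K.$ Subtracting this lower bound from the already-convergent sequence $\int g_n\,d\mu_n$ and invoking $g_n-(g_n\wedge K)=(g_n-K)^+$ yields
\begin{equation*}
\slim_{n\to\infty}\int_\S (g_n-K)^+\,d\mu_n\le\int_\S (h-K)^+\,d\mu .
\end{equation*}
Since $h$ is $\mu$-integrable, monotone convergence drives the right-hand side to $0$ as $K\to\infty,$ and the elementary inequality $x\mathbf{I}\{x\ge 2K\}\le 2(x-K)^+$ (valid for $x\ge 0$) converts this into $\lim_{K\to\infty}\slim_{n\to\infty}\int_\S g_n\mathbf{I}\{g_n\ge K\}\,d\mu_n=0,$ which is exactly the a.u.i.~property for $\{g_n\}.$

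The main obstacle I expect is the step that turns a $\limsup$ of a difference into a difference of limits: it is valid only because $\int g_n\,d\mu_n$ genuinely converges, and that convergence is not free but has to be extracted from the two-sided squeeze produced by combining Theorem~\ref{thm:Fatou:w:ui} with the hypothesis \eqref{eq:dcth suff ai1}. A secondary point to flag carefully is the identification $\ilim_{n,s'\to s}(g_n(s')\wedge K)=h(s)\wedge K,$ where the continuity and monotonicity of truncation must be reconciled with the joint nature of the $\liminf$ over $n$ and $s'.$ Everything else, including the final invocation of Corollary~\ref{cor:leb:weak}, is routine once a.u.i.~for $\{g_n\}$ has been established.
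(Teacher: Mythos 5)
Your proof is correct, but it takes a genuinely different route from the paper. The paper disposes of this corollary in two lines: it applies Theorem~\ref{thm:var->ui} to the sign-flipped pair $\mathtt{f}_n:=-|f_n|$, $\mathtt{g}_n:=-g_n$ (for which Assumption~\ref{as:majorant} becomes Assumption~\ref{as:minorant} with $\mathtt{g}_n\le 0$ uniformly bounded above), concluding that $\{f_{n+N}\}$ is u.i.\ for some $N$, and then passes through Theorem~\ref{thm:uiCondEqui} to Corollary~\ref{cor:leb:weak}. You instead prove a.u.i.\ of $\{g_n\}$ (hence of $\{f_n\}$) directly by a truncation argument: Theorem~\ref{thm:Fatou:w:ui} applied to the nonnegative $g_n$ plus the upper bound in \eqref{eq:dcth suff ai1} squeeze $\int_\S g_n\,d\mu_n$ into actual convergence to $\int_\S h\,d\mu<\infty$ with $h(s)=\ilim_{n\to\infty,s'\to s}g_n(s')$; a second application of Fatou to the truncations $g_n\wedge K$ (using that $\ilim$ commutes with the continuous nondecreasing map $x\mapsto x\wedge K$) then yields $\slim_n\int_\S(g_n-K)^+\,d\mu_n\le\int_\S(h-K)^+\,d\mu\to 0$, and the inequality $x\h\{x\ge 2K\}\le 2(x-K)^+$ finishes. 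All the steps check out, including the one you flag: the conversion of $\slim_n(a_n-b_n)$ into $\lim_n a_n-\ilim_n b_n$ is legitimate precisely because the squeeze delivers convergence of $a_n=\int_\S g_n\,d\mu_n$ to a finite limit, and that convergence is special to the two-sided majorant hypothesis \eqref{eq:dcth suff ai1} (it is not available in the one-sided minorant setting of Theorem~\ref{thm:var->ui}, which is why the paper's proof of that theorem is more delicate). What each approach buys: the paper's reduction is essentially free given that Theorem~\ref{thm:var->ui} has already been proved, whereas your argument is self-contained modulo Theorem~\ref{thm:Fatou:w:ui} and Corollary~\ref{cor:leb:weak}, works entirely at the level of a.u.i.\ so it never needs the shift index $N$ or the equivalence in Theorem~\ref{thm:uiCondEqui}, and in effect gives an independent, cleaner proof of the special case of Theorem~\ref{thm:var->ui} that this corollary actually requires.
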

\begin{proof}
According to Theorem~\ref{thm:var->ui}, applied to $\mathtt{f}_n(s):=-|f_n(s)|$ and $\mathtt{g}_n(s):=-g_n(s),$ $n=1,2,\ldots,$ $s\in\S,$
Assumption~\ref{as:majorant} implies that $\{ f_n \}_{n=1,2,\ldots}$ is u.i.  w.r.t. $\{\mu_n\}_{n=1,2,\ldots}.$
In view of Theorem~\ref{thm:uiCondEqui},
the rest of the proof follows from Corollary~\ref{cor:leb:weak}.
\end{proof}

\section{Counterexamples}\label{sec:counterexa}
The following two examples illustrate that
uniform integrability of $\{ f_{n+N}^-\}_{n=1,2,\ldots}$ for some $N=0,1,\ldots$ neither implies nor is implied by Assumption~\ref{as:minorant}.
In Example~\ref{ex:ui>var}, $\{ f_n^-\}_{n=1,2,\ldots}$ is u.i.~w.r.t. $\{ \mu_n \}_{n=1,2,\ldots},$
but Assumption~\ref{as:minorant} does not hold.

\begin{example}\label{ex:ui>var}
{\rm
	Consider $\S := [0,1]$ endowed with the standard Euclidean metric and consider the probability measures
	\begin{align}
		\mu_n (C) := \int_C n\h \{ s\in[0,\frac{1}{n}] \}\nu(ds), \quad
		\mu (C) := \h \{ 0 \in C \}, \qquad C\in\B(\SS), \quad n=1,2,\ldots,
		\label{eq:ex:pm}
	\end{align}
	where $\nu$ is the Lebesgue measure on $[0,1].$ Then $\mu_n$ converges weakly to $\mu$ as $n\to\infty.$
	Let $f_n:\S\mapsto\R,$ $n=1,2,\ldots,$ be
	\begin{align}
		f_n (s) =
		\begin{cases}
			-i, & \text{if } s \in [\frac{1}{n}(1-2^{-(i-1)}), \frac{1}{n}(1- 2^{-i})), \quad i=1,2,\ldots; \\
			0, & \text{otherwise. }
		\end{cases}
		\label{eq:ex:ui>var:fn}
	\end{align}
	Then  $\int_\S f_n (s) \h \{ s\in\S : f_n (s) \leq -K\}
	\mu_n (ds) =  \sum_{i=\lceil K \rceil}^{\infty} \frac{-i}{2^i} =
	- \frac{\lceil K \rceil + 1}{2^{\lceil K \rceil-1}}$ for each $K > 0$ and for all $n=1,2,\ldots\ .$
	Since $\frac{\lceil K \rceil + 1}{2^{\lceil K \rceil-1}}\to 0$ as
	$K\to+\infty,$ the sequence $\{ f_n^-\}_{n=1,2,\ldots}$ is u.i.~w.r.t. $\{ \mu_n \}_{n=1,2,\ldots}.$
	Now, we show that Assumption~\ref{as:minorant} does not hold.
	Consider an arbitrary sequence of measurable functions	$\{ g_n \}_{n=1,2,\ldots}$
	such that $g_n (s) \leq f_n (s)$ for all $n=1,2,\ldots$ and for all $s\in\SS.$
	Let us prove that \eqref{eq:sw2aaa:var} does not hold.
	Assume, on the contrary, that \eqref{eq:sw2aaa:var} holds.
	Let $G:= \slim_{n\to\infty,s^\prime\to 0}g_n (s^\prime).$
	Since $g_n (s) \leq f_n (s) \leq 0,$ then $G\leq 0.$
	In view of \eqref{eq:ex:pm}, inequalities \eqref{eq:sw2aaa:var} become
    \begin{equation}\label{eq:sw2aaa:varEF}
	-\infty<G\leq \ilim\limits_{n\to\infty}\int_\SS g_n (s)\mu_n(ds).
    \end{equation}
	Note that, if \eqref{eq:sw2aaa:var} is true for $\{ g_n \}_{n=1,2,\ldots},$ then it is true
	for $\{ \tilde{g}_n\}$ such that $\tilde{g}_n (s) := g_n (s) - C,$ where $C\geq 0.$
	Therefore, we can select $\{g_n\}_{n=1,2,\ldots}$ such that
	$G\in \{-2,-3,\ldots\}.$ Then we show that $\ilim_{n\to\infty}\int_\S g_n (s) \mu_n (ds)<G.$
	Observe that the definition of $G$ implies
	\begin{align*}
		 \lim\limits_{m\to\infty} \sup_{n\geq m,s\in[0,\frac{1}{m}]} g_n (s)  \leq  G ;
	\end{align*}
	in fact, the equality takes place, but we do not need it.
	Then for every $\ee >0$ there exists $N(\ee) > 0$ such that $g_n(s) \leq G+ \ee$
	for all $ n\geq  N(\ee) $ and for all $s\in[0,\frac{1}{N(\ee)}].$
	Therefore, $g_n(s) \leq \min\{ G+ \ee, f_n(s) \}$ for all
	$s\in [0,\frac{1}{N(\ee)}]$ and  for all $ n\geq  N(\ee), $
	which implies
	\begin{equation}\label{eqGeps391}\int_\S g_n (s) \mu_n (ds) \leq \int_\S \min\{ G+ \ee, f_n(s) \} \mu_n (ds), \qquad  n\geq  N(\ee) . \end{equation}
	Let $\ee \in(0, 1).$     For $ n\geq  N(\ee)$
	\begin{align}\begin{split} & \int_\S \min\{ G+ \ee, f_n(s) \} \mu_n (ds)  =
		\int_{0}^{ \frac{1}{n}(1-{2^{G+1}}) }(G+ \ee)\mu_n (ds) +
		\int_{\frac{1}{n}(1-{2^{G+1}}) }^{\frac{1}{n}} f_n (s) \mu_n (ds) \\
		 = & ( G+ \ee )(1-{2^{G+1}})+ \sum_{i=-G}^{\infty} \frac{-i}{2^i}  = ( G+ \ee )(1-{2^{G+1}}) - \frac{-G+1}{2^{-G-1}} = G +\ee- (1+\ee) {2^{G+1}},
	\end{split}\label{eqGeps39}\end{align}
	where, as follows from \eqref{eq:ex:ui>var:fn}, the first equality holds because $f_n(s)\ge G+1 >G+\ee$ for $s\in[0,\frac{1}{n}(1-{2^{G+1}}))$ and $f_n(s)\le G<G+\ee$ for $s\in [\frac{1}{n}(1-{2^{G+1}}),\frac{1}{n})$.
	As follows from \eqref{eqGeps391} and \eqref{eqGeps39}, for every $\ee\in(0,1),$
	\begin{align*} \ilim_{n\to\infty}\int_\S g_n (s) \mu_n (ds)
		%\leq \ilim_{n\to\infty}\int_\S \min\{ G+ \ee, f_n(s) \} \mu_n (ds)
		< G +\ee-  {2^{G+1}},
	\end{align*}	
	and, therefore,
	$\ilim_{n\to\infty}\int_\S g_n (s) \mu_n (ds) \le G - {2^{G+1}} < G,$
	which contradicts the second inequality in \eqref{eq:sw2aaa:varEF}. Hence, Assumption~\ref{as:minorant} does not hold.\hfill\qed
}
\end{example}

In addition, Kamihigashi's~\cite[Example 5.1]{Kam17}  of a sequence of  functions, which is not u.i.,  demonstrates that
Assumption~\ref{as:minorant} does not imply that $\{ f_{n  +N  }^-\}_{n=1,2,\ldots}$ is  u.i.~w.r.t. $\{ \mu_n \}_{n=1,2,\ldots}$ for some $N=0,1,\ldots\ .$ The following example is a slight modification of \cite[Example 5.1]{Kam17}.

\begin{example}[{cp. Kamihigashi~\cite[Example 5.1]{Kam17}}]\label{ex:ui<var}
{\rm
	Consider $ \S := [-1,1] $ endowed with the standard Euclidean metric. Let $\mu_n=\mu,$ $n=1,2,\ldots,$ be the
	Lebesgue measure on $\S$ and for %$f_n:\S\mapsto\R,$
$n=1,2,\ldots,$ $s\in\S$
	\begin{align*}
		f_n (s) =
		\begin{cases}
			-n, & \text{if } s \in [-\frac{1}{n},0); \\
			n, & \text{if } s\in (0,\frac{1}{n}]; \\
			0, & \text{otherwise}.
		\end{cases}
	\end{align*}
	Then  $\ilim_{n\to\infty}\int_\S f_n (s) \h \{ s\in\S : f_n (s) \leq -K\} \mu (ds)=-1$ for each $K > 0,$
	which implies that $\{ f_n^-\}_{n=1,2,\ldots}$ is not a.u.i. % w.r.t. $\{ \mu_n \}_{n=1,2,\ldots}.$
	Hence $\{ f_{n  +N }^-\}_{n=1,2,\ldots}$  is not u.i. for each $N=0,1,\ldots;$ % w.r.t. $\{ \mu_n \}_{n=1,2,\ldots};$
    see Theorem~\ref{thm:uiCondEqui}.   For each $n=1,2,\ldots,$
	since $\int_\SS \slim_{n\to\infty,s^\prime\to s} f_n (s^\prime)\mu(ds) =\int_\S f_n (s) \mu (ds) = 0,$
 we have that	\eqref{eq:sw2aaa:var} holds  for $\mu_n=\mu$ with $g_n = f_n.$ \hfill\qed
}
\end{example}

The following example demonstrates that Corollary~\ref{cor:Fatou:w:var} fails if inequalities \eqref{eq:sw2aaa:var} are replaced in Assumption~2.5  with
	\begin{align}\label{eq:sw2aaa:varinf}
		-\infty<\int_\S\ilim_{n\to\infty,s^\prime\to s} g_n(s^\prime)\mu(ds)
		\le\ilim_{n\to\infty}\int_\S g_n(s)\mu_n(ds).
	\end{align}

\begin{example}{\rm (Inequalities \eqref{eq:sw2aaa:varinf} hold, but inequality \eqref{eq:lf} and the second inequality in \eqref{eq:sw2aaa:var} do not hold).}\label{exa:to as25}
{\rm

Let $\S:=[0,+\infty),$ $\mu_n(S)=\mu(S):=\int_S 2^{-s} ds,$ $S\in\B(\S),$ %$n=1,2,\ldots,$
\[
f_n(s):=-2^n\h \{s\in[n,n+1)\},\quad {\rm and}\quad g_n(s):=f_n(s)-\frac{2^{s-1}}{\ln2}\sum_{k=0}^{2^n-1}\h\{s\in[\frac{2k}{2^n},\frac{2k+1}{2^n})\}
\]
for all $s\in[0,+\infty)$ and $n=1,2,\ldots.$
 Note that $f_n(s)\ge g_n(s)$ for all $s\in[0,+\infty)$ and $n=1,2,\ldots.$ Also,
\begin{equation}\label{eq:exaass25:1}
\ilim_{n\to\infty, s'\to s}f_n(s')=\slim_{n\to\infty, s'\to s}g_n(s')=0,\quad s\in[0,+\infty).
\end{equation}
Indeed, $\lim_{n\to\infty, s'\to s}f_n(s')\equiv 0$ because $f_n(s')=0$ for $s'\in [0,s+1),$ when $n\ge\lfloor s\rfloor +2$ and $s\in [0,+\infty),$ where $\lfloor a\rfloor $ is the integer part of the real number $a\in\R.$ Since $g_n(s)\le 0$ for all $s\in[0,+\infty)$ and $n=1,2,\ldots,$ the second equality in \eqref{eq:exaass25:1} follows from
\begin{equation}\label{eq:exaass25:2}
0\ge \slim_{n\to\infty, s'\to s}g_n(s')\ge \lim_{n\to\infty} g_n(\frac{2\lfloor 2^{n-1}s\rfloor +\frac32}{2^{n}})=0,\quad s\in[0,+\infty),
\end{equation}
where the second inequality in \eqref{eq:exaass25:2} holds because $s-\frac{1}{2^{n+1}}< \frac{2\lfloor 2^{n-1}s\rfloor +\frac32}{2^{n}}\le s+ \frac{3}{2^{n+1}}$ for each $n=1,2,\ldots,$ and $\lim_{n\to\infty}(s-\frac{1}{2^{n+1}})=\lim_{n\to\infty}(s+\frac{3}{2^{n+1}})= s;$ and the equality in \eqref{eq:exaass25:2} holds because $\frac{2\lfloor 2^{n-1}s\rfloor+\frac32}{2^{n}}\notin \cup_{k=0}^{2^n-1} [\frac{2k}{2^n},\frac{2k+1}{2^n})$ for each $n=1,2,\ldots.$
Therefore, equalities \eqref{eq:exaass25:1} hold.

We observe that
\begin{equation}\label{eq:exaass25:3}
\ilim_{n\to\infty, s'\to s}g_n(s')=-\frac{2^{s-1}}{\ln2}\h\{s\in[0,2]\},\quad s\in[0,+\infty).
\end{equation}
Indeed, since $g_n(s)\ge f_n(s)-\frac{2^{s-1}}{\ln2}\h\{s\in[0,2]\}$ for each $s\in[0,+\infty)$ and $n=1,2,\ldots,$ and the function $s\mapsto\ilim_{n\to\infty, s'\to s}g_n(s') $ is lower semi-continuous, equality \eqref{eq:exaass25:3} follows from
\begin{equation}\label{eq:exaass25:4}
\ilim_{n\to\infty, s'\to s}g_n(s')=\lim_{n\to\infty} g_n(\frac{2\lfloor2^{n-1}s\rfloor+\frac12}{2^{n}})=-\frac{2^{s-1}}{\ln2},\quad s\in[0,2),
\end{equation}
where the first equality in \eqref{eq:exaass25:4} holds because $s-\frac{3}{2^{n+1}}< \frac{2\lfloor 2^{n-1}s\rfloor +\frac12}{2^{n}}\le s+ \frac{1}{2^{n+1}}$ for each $n=1,2,\ldots,$ and $\lim_{n\to\infty}(s-\frac{3}{2^{n+1}})=\lim_{n\to\infty}(s+\frac{1}{2^{n+1}})= s;$  and the second equality in \eqref{eq:exaass25:4} holds because $\frac{2\lfloor 2^{n-1}s\rfloor +\frac12}{2^{n}}\in \cup_{k=0}^{2^n-1} [\frac{2k}{2^n},\frac{2k+1}{2^n})$ for  $n\ge \max\{1,\lfloor-\log_2(2-s)\rfloor\}.$
Therefore, equality \eqref{eq:exaass25:3} holds.

%Therefore,
%\begin{equation}\label{eq:exaass25:4.5}
%\int_0^\infty \ilim_{n\to\infty, s'\to s}g_n(s') \mu(ds)=-\frac{1}{\ln2}.
%\end{equation}
%Indeed,
 Equality \eqref{eq:exaass25:3} implies
\begin{equation}\label{eq:exaass25:4.5}
\int_0^\infty \ilim_{n\to\infty, s'\to s}g_n(s') \mu(ds)=-\int_0^2\frac{1}{2\ln2}ds =-\frac{1}{\ln2}.
\end{equation}
%where the first equality follows from \eqref{eq:exaass25:3}.

%5. The following equality holds :
%\begin{equation}\label{eq:exaass25:5}
%\int_0^\infty f_n(s) \mu(ds)=-\frac{1}{2\ln2},\quad n=1,2,\ldots.
%\end{equation}
For each $n=1,2,\ldots,$
\begin{equation}\label{eq:exaass25:5}
\int_0^\infty f_n(s) \mu(ds)=-2^n\int_n^{n+1}2^{-s}ds=\frac{2^n}{\ln2}(2^{-n-1}-2^{-n})=-\frac{1}{2\ln2}
\end{equation}
and
%
%6. The following equality holds :
%\begin{equation}\label{eq:exaass25:6}
%\int_0^\infty g_n(s) \mu(ds)=-\frac{1}{\ln2},\quad n=1,2,\ldots.
%\end{equation}
\begin{equation}\label{eq:exaass25:6}
\int_0^\infty g_n(s) \mu(ds)=\int_0^\infty f_n(s) \mu(ds)-\frac{1}{2\ln2}\sum_{k=0}^{2^n-1}\frac{1}{2^n}= -\frac{1}{\ln2}.
\end{equation}

Inequalities \eqref{eq:sw2aaa:varinf} hold because, according to \eqref{eq:exaass25:4.5} and \eqref{eq:exaass25:6},
\[
-\infty<-\frac{1}{\ln2}=\int_\S\ilim_{n\to\infty,s^\prime\to s} g_n(s^\prime)\mu(ds)
		\le\ilim_{n\to\infty}\int_\S g_n(s)\mu_n(ds)=-\frac{1}{\ln2}.\]
However,  inequality \eqref{eq:lf} does not hold because, according to \eqref{eq:exaass25:5} and \eqref{eq:exaass25:1},
\[
-\frac{1}{2\ln2}=\ilim\limits_{n\to\infty}\int_\S f_n (s)\mu_n (ds)< \int_\S \ilim\limits_{n\to\infty,\, s'\to s} f_n (s')\mu(ds)=0.
\]
The second inequality in \eqref{eq:sw2aaa:var} does not hold either because, according to \eqref{eq:exaass25:6} and \eqref{eq:exaass25:1},
\[
-\frac{1}{\ln2}=\ilim_{n\to\infty}\int_\S g_n(s)\mu_n(ds)<\int_\S\slim_{n\to\infty,s^\prime\to s} g_n(s^\prime)\mu(ds)=0.
\]

Therefore, inequalities \eqref{eq:sw2aaa:varinf} hold, but inequality \eqref{eq:lf} and the second inequality in \eqref{eq:sw2aaa:var} do not hold. \hfill\qed}
\end{example}

\section{Examples of applications of Theorem~\ref{thm:uiCondEqui}}
\label{sec:uiaui}

%{\color{blue}
This section provides %the proof of Theorem~\ref{thm:uiCondEqui} (in Subsection~\ref{subsec:proof}) and
 examples of applications of Theorem~\ref{thm:uiCondEqui}.  The usefulness of these applications is that it is typically easier to verify asymptotic u.i. w.r.t. to a sequence of  measures than u.i. %(in Subsections~\ref{subsec:Lebtotal}--\ref{subsec:Ball1989}).
%}

%%%\begin{remark}
%%%{\rm
%%%For a single measure $\mu\in\M(\SS),$ %for each $n=1,2,\ldots,$ the
%%%a set $C$ of measurable
%%%$\olR$-valued functions on $\SS$ is \textit{uniformly integrable} if and only if each sequence $\{f_n\}_{n=1,2,\ldots}\subset C$ is u.i. Therefore, Theorem~\ref{thm:uiCondEqui} implies that $C$ is uniformly integrable if and only if  each sequence $\{f_n\}_{n=1,2,\ldots}\subset C$ is a.u.i.
%%%}
%%%\end{remark}

%%%\begin{remark}
%%%{\rm
%%%A sequence of probability measures $\mu_n$ on a metric space $\S$ converges weakly to a probability measure $\mu$
%%%if and only if \eqref{eq:def:weak} holds for any continuous and a.u.i.~w.r.t. $\{\mu_n \}_{n=1,2,\ldots}$ function;
%%%see e.g. Zapa\l{}a~\cite[Theorem 2]{Zap08} and Dufour and A.~Genadot~\cite[Theorem 3.1]{DG17}.
%%%Therefore, Theorem~\ref{thm:uiCondEqui} implies that
%%%$\mu_n$ converges weakly to $\mu$ if and only if \eqref{eq:def:weak} holds for any continuous and u.i.~w.r.t. $\{\mu_n \}_{n=1,2,\ldots}$ function.
%%%}
%%%\end{remark}

\subsection{Uniform Fatou's lemma and uniform Lebesgue's  dominated
convergence theorem for measures converging in total variation}
\label{subsec:Lebtotal}

%Let us clarify the Lebesgue dominated convergence theorem for measures that converge in total variation. Note that if $\{f_n\}_{n=1,2,\ldots}$ is u.i.~w.r.t. $\{\mu_n\}_{n=1,2,\ldots}$ (see \eqref{eq:tv:ui}) in condition~(ii) of Theorem~\ref{thm:UFL:FKZ111LEB} below, then we obtain Corollary~2.9 from Feinberg et al. \cite{FKZ16}.
{
The following statements  are
\cite[Theorem~2.1 and Corollary 2.9]{FKZ16} with conditions~(ii)  replacing the conditions that $\{ f^-_n\}_{n=1,2,\ldots}$  and  $\{ f_n\}_{n=1,2,\ldots}$ are u.i.~w.r.t. $\{ \mu_n \}_{n=1,2,\ldots}$  respectively.  %and with the additional assumption that  $f_n\in L^1 (\SS;\mu_n)$ for each $n=1,2,\ldots\  $
As explained in \cite{FKZ16}, inequality \eqref{eq:ufl:tv} is stronger than the inequality in Fatou's lemma, and the sufficient condition in Proposition~\ref{thm:UFL:FKZ} can be viewed as the uniform version of Fatou's lemma.
Since the convergence in \eqref{eq:ufl:tvLeb} is a uniform version of convergence of integrals, the sufficient condition in Proposition~\ref{thm:UFL:FKZ111LEB} can be viewed as the uniform version of Lebesgue's  dominated convergence theorem.  %The adjective `dominated' is used because, if $\{ f_n \}_{n=1,2,\ldots}$ is dominated by a function from $L^1 (\SS;\mu),$ then $\{ f_n \}_{n=1,2,\ldots}$ is u.i.~w.r.t. $\{ \mu_n \}_{n=1,2,\ldots}.$
}

\begin{proposition}[Uniform Fatou's lemma; cp. {\cite[Theorem 2.1]{FKZ16}}]\label{thm:UFL:FKZ}
Let $(\SS, \SSig)$ be a measurable space, a sequence of measures
$\{\mu_n\}_{n=1,2,\ldots}$ from $\M (\SS)$ converge
in total variation to a measure $\mu\in\M(\SS),$ and
$\{ f,f_n\}_{n=1,2,\ldots}$ be a sequence of measurable $\olR$-valued functions on $\SS.$
Assume that $f\in L^1 (\SS;\mu)$ and  $f_n\in L^1 (\SS;\mu_n)$ for each $n=1,2,\ldots\ .$  Then the inequality
\begin{align}
	\ilim_{n\to\infty} \inf_{C\in\SSig} \Big( \int_C f_n (s)\mu_n (ds) - \int_C f(s) \mu (ds) \Big) \geq 0
	\label{eq:ufl:tv}
\end{align}
 takes place if and only if the following two statements hold:
\begin{itemize}
	\item[{\rm (i)}] for each $\ee > 0$
	\begin{align*}
		\mu (\{ s\in\SS : f_n (s) \leq f(s) - \ee \}) \to 0 \text{ as }  n\to \infty;
		\end{align*}
	\item[{\rm (ii)}] $\{f^-_n\}_{n=1,2,\ldots}$ is a.u.i.~w.r.t. $\{\mu_n\}_{n=1,2,\ldots}.$
\end{itemize}
\end{proposition}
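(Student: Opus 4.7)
The plan is to reduce Proposition~\ref{thm:UFL:FKZ} to the original \cite[Theorem 2.1]{FKZ16} by invoking Theorem~\ref{thm:uiCondEqui}. The only difference between the two statements lies in condition~(ii): the original result requires $\{f_n^-\}_{n=1,2,\ldots}$ to be u.i. w.r.t. $\{\mu_n\}_{n=1,2,\ldots},$ whereas Proposition~\ref{thm:UFL:FKZ} asks only for asymptotic uniform integrability. So the entire task is to bridge this gap in both directions.

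For the necessity direction, I would apply \cite[Theorem 2.1]{FKZ16} directly: from \eqref{eq:ufl:tv} one obtains (i) together with the stronger statement that $\{f_n^-\}_{n=1,2,\ldots}$ is u.i. w.r.t. $\{\mu_n\}_{n=1,2,\ldots}.$ Since $\slim_{n\to\infty} a_n \le \sup_{n\ge 1} a_n$ for any nonnegative sequence $\{a_n\},$ the u.i.\ property trivially implies the a.u.i.\ property, so (ii) follows. For the sufficiency direction, assume (i) and (ii) hold. By Theorem~\ref{thm:uiCondEqui} there exists $N=0,1,\ldots$ such that $\{f_{n+N}^-\}_{n=1,2,\ldots}$ is u.i. w.r.t. $\{\mu_{n+N}\}_{n=1,2,\ldots}.$ Convergence in total variation is preserved by truncating the first $N$ terms, so $\{\mu_{n+N}\}_{n=1,2,\ldots}$ still converges in total variation to $\mu;$ the integrability hypotheses $f\in L^1(\SS;\mu)$ and $f_{n+N}\in L^1(\SS;\mu_{n+N})$ carry over immediately; and condition (i) is a tail statement in $n,$ so it also holds for the shifted sequence. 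Hence \cite[Theorem 2.1]{FKZ16} applies to $\{f_{n+N}\}_{n=1,2,\ldots}$ and $\{\mu_{n+N}\}_{n=1,2,\ldots},$ yielding
\[
\ilim_{n\to\infty}\inf_{C\in\SSig}\Bigl(\int_C f_{n+N}(s)\mu_{n+N}(ds)-\int_C f(s)\mu(ds)\Bigr)\ge 0.
\]
Since $\liminf_{n\to\infty}$ is invariant under a finite index shift, inequality \eqref{eq:ufl:tv} for the original sequences follows.

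I do not anticipate a genuinely hard step: the heavy lifting has already been done in Theorem~\ref{thm:uiCondEqui} and in \cite[Theorem 2.1]{FKZ16}. The only point that warrants a line of explanation is the index-shift invariance of the liminf together with the preservation of the total variation convergence and of condition (i) under the shift; all three are routine.
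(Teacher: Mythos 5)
Your proposal is correct and follows exactly the route the paper takes: its proof is the one-line reduction to \cite[Theorem~2.1]{FKZ16} combined with Theorem~\ref{thm:uiCondEqui}, and your write-up simply fills in the (routine) details of the index shift and the trivial direction u.i.\ $\Rightarrow$ a.u.i. No gaps.
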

\begin{proof}
The theorem follows from \cite[Theorem~2.1]{FKZ16} and Theorem~\ref{thm:uiCondEqui}.
%Let \eqref{eq:ufl:tv} hold.
\end{proof}

 We notice that, since $\emptyset\in\Sigma,$  the left-hand side of \eqref{eq:ufl:tv} is nonpositive.  Therefore, inequality  \eqref{eq:ufl:tv} takes place if and only if it holds in the form of the equality. Since the left hand side of \eqref{eq:ufl:tv} is the lower limit of a sequence of nonpositive numbers, the lower limit in \eqref{eq:ufl:tv} is the limit.

\begin{proposition}[Uniform Lebesgue's  dominated convergence theorem; cp. %Feinberg~et~al.~
{\cite[Corollary~2.9]{FKZ16}}]\label{thm:UFL:FKZ111LEB}
Let $(\SS, \SSig)$ be a measurable space,
$\{\mu_n\}_{n=1,2,\ldots}$ be a sequence of measures from $\M (\SS)$ converging
in total variation to a measure $\mu\in\M(\SS),$ and
$\{ f,f_n \}_{n=1,2,\ldots}$ be a sequence of measurable $\olR$-valued functions on $\SS.$
Assume that $f\in L^1 (\SS;\mu)$ and $f_n\in L^1 (\SS;\mu_n)$ for each $n=1,2,\ldots\ .$ Then
\begin{align}
	\lim_{n\to\infty} \sup_{C\in\SSig} \Big| \int_C f_n (s)\mu_n (ds) - \int_C f(s) \mu (ds) \Big| = 0
	\label{eq:ufl:tvLeb}
\end{align}
 if and only if the following two statements hold:
\begin{itemize}
	\item[{\rm (i)}] $\{f_n\}_{n=1,2,\ldots}$ converge to $f$ in measure $\mu;$
	\item[{\rm (ii)}] $\{f_n\}_{n=1,2,\ldots}$ is a.u.i.~w.r.t. $\{\mu_n\}_{n=1,2,\ldots}.$ %(see \eqref{eq:tv:ui:lim}).
\end{itemize}
\end{proposition}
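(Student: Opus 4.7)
The plan is to deduce the proposition directly from \cite[Corollary~2.9]{FKZ16} together with Theorem~\ref{thm:uiCondEqui}, in complete parallel to the proof of Proposition~\ref{thm:UFL:FKZ} displayed above. The only discrepancy between the cited corollary and the present statement lies in condition~(ii): \cite[Corollary~2.9]{FKZ16} is formulated under the assumption that $\{f_n\}_{n=1,2,\ldots}$ is uniformly integrable w.r.t.\ $\{\mu_n\}_{n=1,2,\ldots}$, whereas here we ask only for asymptotic uniform integrability.

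For the \emph{necessity} of (i) and (ii), I would quote \cite[Corollary~2.9]{FKZ16} to conclude that \eqref{eq:ufl:tvLeb} implies (i) as well as the uniform integrability of $\{f_n\}_{n=1,2,\ldots}$ w.r.t.\ $\{\mu_n\}_{n=1,2,\ldots}$. Uniform integrability trivially implies asymptotic uniform integrability, because in \eqref{eq:tv:ui} the outer $\sup_{n\ge 1}$ dominates the $\slim_{n\to\infty}$ appearing in \eqref{eq:tv:ui:lim}; letting $K\to+\infty$ then yields~(ii).

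For \emph{sufficiency}, assume (i) and (ii). Theorem~\ref{thm:uiCondEqui} produces an integer $N\ge 0$ for which $\{f_{n+N}\}_{n=1,2,\ldots}$ is u.i.\ w.r.t.\ $\{\mu_{n+N}\}_{n=1,2,\ldots}$. The shifted measures still converge in total variation to $\mu$, and the shifted functions still converge to $f$ in measure $\mu$, so \cite[Corollary~2.9]{FKZ16} applied to the shifted sequences delivers \eqref{eq:ufl:tvLeb} with $n$ replaced throughout by $n+N$. Because the $n\to\infty$ limit is insensitive to discarding finitely many initial terms, this is the same assertion as \eqref{eq:ufl:tvLeb}.

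I do not anticipate any real obstacle: the argument is pure bookkeeping. The only point worth an explicit remark is the invariance of all three ingredients---convergence of $\mu_n$ to $\mu$ in total variation, convergence of $f_n$ to $f$ in measure $\mu$, and the value of the limit in \eqref{eq:ufl:tvLeb}---under a finite shift of the index, which is immediate from the definitions.
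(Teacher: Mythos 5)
Your proposal is correct and follows exactly the paper's route: the paper's proof is a one-line deduction from \cite[Corollary~2.9]{FKZ16} and Theorem~\ref{thm:uiCondEqui}, and you have merely spelled out the same reduction (necessity via u.i.\ $\Rightarrow$ a.u.i., sufficiency via the index shift supplied by Theorem~\ref{thm:uiCondEqui} and the shift-invariance of all hypotheses and of the limit in \eqref{eq:ufl:tvLeb}). No gaps.
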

\begin{proof}
The theorem follows from \cite[Corollary~2.9]{FKZ16} and Theorem~\ref{thm:uiCondEqui}.
%Let \eqref{eq:ufl:tv} hold.
\end{proof}

\subsection{On Dunford-Pettis theorem}\label{subsec:dp}
%\subsection{Dunford-Pettis theorem for sequences}
As follows from Eberlein-\v{S}mulian theorem,
the Dunford-Pettis theorem implies that the sequence $\{ f_n \}_{n=1,2,\ldots}\subset L^1(\SS;\mu)$
has a weakly convergent subsequence $\{f_{n_k}\}_{k=1,2,\ldots}$ to $f\in L^1(\SS;\mu)$ in $L^1(\SS;\mu)$ if and only if
$\{ f_n \}_{n=1,2,\ldots}$ is u.i.; see, for example, Albiac and Kalton~\cite[Theorem~5.2.9, p.~109]{Alb}, Bogachev~\cite[Theorem~4.7.18, p.~285]{Bog}, Diestel~\cite[p.~93]{Dis}, Edwards~\cite[Theorem~4.21.2, p.~274]{Edw}, Meyer~\cite[T23, p.~20]{Mey66}, Treves~\cite[Theorem~46.1, p.~471]{Tre}, and Wojtaszczyk~\cite[Theorem~12, p.~137]{Woj}.

The main result of this subsection has the following formulation.

\begin{proposition}\label{thm:Dunf-Pett}
Let $(\SS, \SSig)$ be a measurable space, $\mu\in \M (\SS),$ and
$\{ f_n \}_{n=1,2,\ldots}\subset L^1(\SS;\mu)$ be a sequence of measurable
$\olR$-valued functions on $\SS.$ Then the following statements are equivalent:
\begin{itemize}
\item[(i)] there exists $\{f_{n_k}\}_{k=1,2,\ldots}\subset\{ f_n \}_{n=1,2,\ldots}$
such that $f_{n_k}\to f$ weakly in $L^1(\SS;\mu)$ for some $f\in L^1(\SS;\mu);$
\item[(ii)] there exists $ N=0,1,\ldots  $ such that $\{f_{n+  N  }\}_{n=1,2,\ldots}$ is u.i.;
\item[(iii)] $\{f_n\}_{n=1,2,\ldots}$ is a.u.i.
\end{itemize}
\end{proposition}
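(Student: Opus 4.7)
The plan is to obtain the equivalence (ii)~$\Leftrightarrow$~(iii) from Theorem~\ref{thm:uiCondEqui} and the equivalence (i)~$\Leftrightarrow$~(ii) from the classical Dunford-Pettis and Eberlein-\v{S}mulian theorems recalled above. For (ii)~$\Leftrightarrow$~(iii), I would simply apply Theorem~\ref{thm:uiCondEqui} to the constant sequence of measures $\mu_n:=\mu,$ $n=1,2,\ldots.$

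For (i)~$\Leftrightarrow$~(ii), the first step is to reduce (ii) to uniform integrability of the whole sequence $\{f_n\}_{n=1,2,\ldots}.$ Since $f_n\in L^1(\SS;\mu),$ absolute continuity of the Lebesgue integral yields $\int_\SS|f_n(s)|\h\{s\in\SS:|f_n(s)|\geq K\}\mu(ds)\to 0$ as $K\to+\infty$ for each fixed $n;$ taking the maximum over the finitely many indices $1,\ldots,N$ shows that any finite subfamily of $\{f_n\}$ is u.i., and hence adjoining $N$ such $L^1(\SS;\mu)$ functions to a u.i.\ family preserves uniform integrability. Consequently, the existence of $N$ for which $\{f_{n+N}\}_{n\geq 1}$ is u.i.\ is equivalent to uniform integrability of the entire sequence $\{f_n\}_{n\geq 1}.$ I would then invoke the Dunford-Pettis theorem (valid because $\mu$ is finite) to identify uniform integrability of the set $\{f_n:n\geq 1\}$ with its relative weak compactness in $L^1(\SS;\mu),$ and the Eberlein-\v{S}mulian theorem to identify relative weak compactness with the existence of a weakly convergent subsequence of $\{f_n\}$ whose limit lies in $L^1(\SS;\mu),$ which is statement (i).

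The main point requiring care is the translation between the tail-formulation of uniform integrability in (ii) and uniform integrability of the whole sequence; once this reduction is made, the rest of the argument is a direct citation of the standard Banach-space results recalled at the beginning of the subsection, and I do not expect any substantial further obstacle.
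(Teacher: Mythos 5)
Your proposal follows the paper's proof essentially verbatim: the equivalence (ii)$\Leftrightarrow$(iii) is obtained from Theorem~\ref{thm:uiCondEqui} applied with $\mu_n:=\mu$ for all $n$, and (i)$\Leftrightarrow$(ii) is obtained by combining the Dunford--Pettis theorem with the Eberlein--\v{S}mulian theorem, exactly as the paper does. The only difference is that you explicitly justify why the tail formulation in (ii) is equivalent to uniform integrability of the whole sequence (each $f_n\in L^1(\SS;\mu)$ is individually u.i., so adjoining finitely many such functions preserves uniform integrability), a reduction the paper leaves implicit; this is a correct and useful addition rather than a change of method.
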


\begin{proof}
In view of Eberlein-\v{S}mulian theorem, statements (i) and (ii) are equivalent due to the Dunford-Pettis theorem.
The equivalence of statements~(ii) and (iii) directly follows from Theorem~\ref{thm:uiCondEqui}.
\end{proof}

%\subsection{On the fundamental theorem concerning Young measures}\label{sec:Ball1989}
\subsection{The fundamental theorem for Young measures}\label{subsec:Ball1989}

In this subsection we provide an equivalent formulation of the fundamental theorem
for %concerning
Young measures from Ball~\cite{Ball1989}. Let $n,m=1,2,\ldots,$ $\Omega\subset\R^n$ be Lebesgue measurable, $C\subset \R^m$ be closed. Let ${\rm meas}$ denote the Lebesgue measure on $\R^n.$ Consider Banach spaces $L^1(\Omega)$ and $L^\infty(\Omega)$ of all integrable and essentially bounded functions on $\Omega$ respectively, endowed with the standard norms.

\begin{proposition}\label{thm:Ball1989}
Let $z^{(j)}:\Omega\mapsto \R^m,$ $j=1,2,\ldots,$ be a sequence of Lebesgue measurable functions satisfying $z^{(j)}(\,\cdot\,)\to  C$ in measure as $j\to\infty,$ that is, for every
neighbourhood $U$ of $ C$ in $\R^m$
\[
\lim_{j\to\infty} {\rm meas} \{x\in\Omega \, :\, z^{(j)}(x)\notin U\}= 0.
\]
Then there exists a subsequence $\{z^{(j_k)}\}_{k=1,2,\ldots}$ of $\{z^{(j)}\}_{j=1,2,\ldots}$ and a family $(\nu_x)$, $x\in\Omega,$ of positive measures on $\R^m,$ depending measurably on $x,$ such that
\begin{itemize}
\item[{\rm(i)}] $\|\nu_x\|_M:=\int_{\R^m} d\nu_x\le 1$ for a.e. $x\in\Omega;$
\item[{\rm(ii)}] ${\rm supp\,} \nu_x\subset C$ for a.e. $x\in\Omega;$ and
\item[{\rm(iii)}] $f(z^{(j_k)})\to\langle \nu_x,f\rangle=\int_{\R^m}f(\lambda) d\nu_x(\lambda)$ weakly star in $L^\infty(\Omega)$ for each continuous function $f:\R^m\mapsto \R$ satisfying $\lim_{|\lambda|\to \infty} f(\lambda)=0.$
\end{itemize}
Suppose further that $\{z^{(j_k)}\}_{k=1,2,\ldots}$ satisfies the asymptotic boundedness condition
\begin{equation}\label{eq:Ball1989:1}
\lim_{K\to+\infty} \slim_{k\to\infty}\, {\rm meas} \{x\in \Omega\cap B_R\,:\, |z^{(j_k)}(x)|\ge K \}=0,
\end{equation}
for every $R>0,$ where $B_R=  B_R(\bar{0})  $  is a ball of radius R and center $\bar{0}$ in the Euclidean n-space $\R^n.$ Then  $\|\nu_x\|_M=1$ for a.e. $x\in\Omega$ (that is, $\nu_x$ is a probability measure), and given any measurable subset $A$ of $\Omega$
\begin{equation}\label{eq:Ball1989:2}
f(z^{(j_k)})\to\langle \nu_x,f\rangle\quad\mbox{ weakly in }L^1(A)
\end{equation}
for any continuous function $f:\R^m\mapsto \R$ such that $\{f(z^{(j_k)})\}_{k=1,2,\ldots}$ is sequentially
weakly relatively compact in $L^1(A).$
\end{proposition}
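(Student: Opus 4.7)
The plan is to deduce this proposition from Ball's original fundamental theorem for Young measures~\cite{Ball1989}, whose statement involves the ``non-asymptotic'' tightness hypothesis $\lim_{K\to+\infty}\sup_{k}{\rm meas}\{x\in\Omega\cap B_R\,:\,|z^{(j_k)}(x)|\ge K\}=0$ in place of~\eqref{eq:Ball1989:1}. The first part of the proposition (conclusions~(i), (ii), (iii)) requires only convergence in measure to $C,$ so it follows verbatim from Ball's theorem. The remaining tasks are (a)~to bridge between the $\sup_k$ and $\slim_{k\to\infty}$ versions of the tightness hypothesis in order to derive $\|\nu_x\|_M=1,$ and (b)~to handle the weak $L^1(A)$ convergence in~\eqref{eq:Ball1989:2}.

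For task~(a), I would run the same type of argument as in the proof of Theorem~\ref{thm:uiCondEqui}, adapted to tightness of measures rather than uniform integrability of integrals. Fix $R>0$ and set $a_{k,K}:={\rm meas}\{x\in\Omega\cap B_R\,:\,|z^{(j_k)}(x)|\ge K\}.$ The quantity $a_{k,K}$ is non-increasing in $K$ and bounded by ${\rm meas}(\Omega\cap B_R)<\infty.$ Given $\varepsilon>0,$ condition~\eqref{eq:Ball1989:1} supplies $K_0$ and $N$ with $a_{k,K}<\varepsilon$ for all $k\ge N$ and $K\ge K_0;$ since each $z^{(j_k)}$ takes values in $\R^m,$ for every $k<N$ there exists $K_k$ with $a_{k,K}<\varepsilon$ for $K\ge K_k.$ Taking $K^*:=\max\{K_0,K_1,\ldots,K_{N-1}\}$ yields $\sup_{k\ge 1}a_{k,K}<\varepsilon$ for $K\ge K^*$, so~\eqref{eq:Ball1989:1} is equivalent to the $\sup_k$-version of tightness. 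The corresponding conclusion of Ball's theorem then gives $\|\nu_x\|_M=1$ for a.e.~$x\in\Omega.$

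For task~(b), the assumption that $\{f(z^{(j_k)})\}_{k=1,2,\ldots}$ is sequentially weakly relatively compact in $L^1(A)$ is, by Proposition~\ref{thm:Dunf-Pett}, equivalent to this sequence being a.u.i., and hence, by Theorem~\ref{thm:uiCondEqui}, to being u.i.\ after discarding finitely many terms. Combined with the weak-$*$ convergence in $L^\infty(\Omega)$ from~(iii), applied to continuous truncations $f_R$ of $f$ satisfying $\lim_{|\lambda|\to\infty}f_R(\lambda)=0,$ a standard truncation argument (letting $R\to\infty$ and using uniform integrability to control the tails uniformly in $k$) identifies the weak $L^1(A)$-limit as $\langle\nu_x,f\rangle,$ and uniqueness of this limit forces the entire sequence $\{f(z^{(j_k)})\}$ to converge.

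I expect the only genuine obstacle to be the tightness equivalence in task~(a): Theorem~\ref{thm:uiCondEqui} cannot be quoted literally there because the controlled quantity is $\mu_n(\{|f_n|\ge K\})$ rather than $\int|f_n|\h\{|f_n|\ge K\}\,\mu_n(ds),$ although its proof technique transfers immediately. Everything else reduces to a direct invocation of Ball~\cite{Ball1989} combined with Proposition~\ref{thm:Dunf-Pett} and Theorem~\ref{thm:uiCondEqui}.
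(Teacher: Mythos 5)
Your proposal is correct, and the decisive step---showing that the asymptotic tightness condition \eqref{eq:Ball1989:1} is equivalent to the $\sup_k$ version \eqref{eq:Ball1989:3} appearing in Ball's original theorem---is exactly what the paper's proof establishes, but you and the paper get there differently. You prove the equivalence directly: monotonicity of $a_{k,K}$ in $K$, an $N$ and $K_0$ from \eqref{eq:Ball1989:1} controlling the tail $k\ge N$, and continuity from above of the finite measure ${\rm meas}(\cdot\cap B_R)$ to handle each of the finitely many $k<N$. This is elementary, self-contained, and in substance reproves Lemma~\ref{lm:Kar} in the special case at hand. The paper instead quotes Theorem~\ref{thm:uiCondEqui} \emph{literally}, by the change of measure $\mu_k(S):=\int_S ds/\bigl(|z^{(j_k)}(s)|\h\{|z^{(j_k)}(s)|\ge 1\}\bigr)$ and $f_k(s):=|z^{(j_k)}(s)|\h\{|z^{(j_k)}(s)|\ge 1\}$ on $\SS:=\Omega\cap B_R$, which makes $\int_\SS|f_k|\h\{|f_k|\ge K\}\,d\mu_k={\rm meas}\{x\in\Omega\cap B_R:|z^{(j_k)}(x)|\ge K\}$ for $K>1$; so the obstacle you flagged at the end (that Theorem~\ref{thm:uiCondEqui} controls an integral rather than a measure of a level set) is precisely what this trick removes. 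Your route buys transparency and avoids the slightly delicate definition of $\mu_k$; the paper's route buys brevity and showcases Theorem~\ref{thm:uiCondEqui} as the engine, which is the point of Section~\ref{sec:uiaui}.

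One remark on your task (b): it is unnecessary. Once \eqref{eq:Ball1989:1} is upgraded to \eqref{eq:Ball1989:3}, Ball's theorem already asserts \eqref{eq:Ball1989:2} verbatim for every $f$ with $\{f(z^{(j_k)})\}_{k=1,2,\ldots}$ sequentially weakly relatively compact in $L^1(A)$; there is nothing left to reprove. Moreover, as sketched, your truncation argument invokes Proposition~\ref{thm:Dunf-Pett} on $L^1(A)$ for an arbitrary measurable $A\subset\Omega$, which may have infinite Lebesgue measure; the u.i.\ characterization of weak relative compactness requires a finite measure, which is why the paper confines the corresponding observation to bounded $A$ in Remark~\ref{rem:Ball1989:3}. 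Dropping task (b) removes this issue entirely.
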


\begin{remark}\label{rem:Ball1989:1}
{\rm
%If we replace \eqref{eq:Ball1989:1} in Theorem~\ref{thm:Ball1989} by
Ball~\cite[Theorem]{Ball1989} is Proposition~\ref{thm:Ball1989} with \eqref{eq:Ball1989:1} replaced with
\begin{equation}\label{eq:Ball1989:3}
\lim_{ K\to+\infty} \sup_{k=1,2,\ldots}\, {\rm meas} \{x\in \Omega\cap B_R\,:\, |z^{(j_k)}(x)|\ge K \}=0 { .} %,
\end{equation}
%for every $R>0,$ then we obtain Theorem from Ball~\cite{Ball1989}.
}
\end{remark}

\begin{remark}\label{rem:Ball1989:2}
{\rm
Condition~(\ref{eq:Ball1989:1}) is equivalent to the following one: given any $R>0$ there exists a continuous function $g_R:[0,+\infty)\mapsto \R,$ with $\lim_{t\to\infty}g_R(t)=+\infty,$ such that
\begin{equation}\label{eq:Ball1989:4}
\slim_{k\to\infty}\int_{\Omega\cap B_R} g_R(|z^{(j_k)}(x)|)dx <\infty;
\end{equation}
see %cp. % change to `see'?
Ball~\cite[Remark~1]{Ball1989}.
 Without loss of generality, we can also replace limit superior with limit interior in \eqref{eq:Ball1989:4} if Condition~(\ref{eq:Ball1989:1}) is replaced with the condition in this remark.
}
\end{remark}

\begin{remark}\label{rem:Ball1989:3}
{\rm
If $A$ is bounded in Proposition~\ref{thm:Ball1989}, then Proposition~\ref{thm:Dunf-Pett} implies that the condition that
$\{f(z^{(j_k)})\}_{k=1,2,\ldots}$ is sequentially
weakly relatively compact in $L^1(A)$ is satisfied if and only if $\{f(z^{(j_k)})\}_{k=1,2,\ldots}$ is a.u.i.
}
\end{remark}

\begin{proof}[Proof of Proposition~\ref{thm:Ball1989}]
All statements follow from Theorem from Ball~\cite{Ball1989} and Theorem~\ref{thm:uiCondEqui}, being applied to $\SS:=\Omega\cap B_R$ endowed with the Lebesgue $\sigma$-algebra $\SSig$ on $\Omega\cap B_R,$
\begin{align*}
	f_k(s) & :=|z^{(j_k)}(s)|\h\{s\in\SS\,:\, |z^{(j_k)}(s)|\ge 1\}, \\
	\mu_k(S) & :=\int_S\frac{ds}{|z^{(j_k)}(s)|\h\{s\in\SS\,:\, |z^{(j_k)}(s)|\ge 1\}},
\end{align*}
for each $S\in\SSig$ and sufficiently large $k{\ge}1.$  Indeed, since
\[
\begin{aligned}
&\int_\SS |f_k (s)|
		\h \{ s\in\SS : |f_k (s)| \geq  K\} \mu_k (ds)={\rm meas} \{x\in \Omega\cap B_R\,:\, |z^{(j_k)}(x)|\ge K\},
\end{aligned}
\]
for each $ K >1$ and sufficiently large $k{\ge}1,$ we have that Theorem~\ref{thm:uiCondEqui} implies that \eqref{eq:Ball1989:1} and \eqref{eq:Ball1989:3} are equivalent, and, therefore, the statements of Theorem from Ball \cite{Ball1989} and Proposition~\ref{thm:Ball1989} are equivalent.
\end{proof}	

\section{Proofs of Theorems~\ref{thm:Fatou:w:ui}, \ref{thm:var->ui} and Corollary~\ref{cor:Fatou:w:var}}\label{sec:proof thm ui ass}

This section contains the proofs of Theorems~\ref{thm:Fatou:w:ui}, \ref{thm:var->ui} and Corollary~\ref{cor:Fatou:w:var}.

\begin{proof}[Proof of Theorem~\ref{thm:Fatou:w:ui}]
Let us fix an arbitrary $K > 0.$ Then
\begin{equation}	
\begin{aligned}
	 \ilim_{n\to\infty} \int_\S f_n (s) &\mu_n (ds) \\
	  \geq\ilim_{n\to\infty}& \int_\S f_n (s) \h \{ s\in\S: f_n (s) > -K \} \mu_n (ds)\\  +& \ilim_{n\to\infty} \int_\S f_n (s) \h \{ s\in\S: f_n (s) \leq -K \} \mu_n (ds).
	\label{eq:lbd:ui}
	\end{aligned}
\end{equation}

The following inequality holds:
\begin{equation}\label{eq:finK}
\ilim_{n\to\infty} \int_\S f_n (s) \h \{ s\in\S: f_n (s) > -K \} \mu_n (ds)\ge \int_\S \ilim_{n\to\infty,s^\prime\to s}f_n(s^\prime)\mu (ds).
\end{equation}
Indeed, if $\mu (\S)=0,$ then
\[
\ilim_{n\to\infty} \int_\S f_n (s) \h \{ s\in\S: f_n (s) > -K \} \mu_n (ds)\ge -K\lim_{n\to\infty}\mu_n(\S)=0= \int_\S \ilim_{n\to\infty,s^\prime\to s}f_n(s^\prime)\mu (ds),
\]
where the equalities hold because $\mu_n(\S)\to\mu(\S)=0$ as $n\to\infty.$ Otherwise, if $\mu (\S)>0,$ then \cite[Theorem~4.2]{FKZTVP}, applied to $\{\tilde{f}_n\}_{n=1,2,\ldots}:=\{f_{n+N}\}_{n=1,2,\ldots},$ $\tilde{g}_n\equiv-K,$ $\tilde{\mu}_n (C) := \frac{\mu_{n+N}(C)}{\mu_{n+N}(\S)}$ and $\tilde{\mu}(C)=\frac{\mu(C)}{\mu(\S)},$
for each $n=1,2,\ldots$ and $C\in\B(\S),$ where $N=1,2,\ldots$ is sufficiently large,
implies
\begin{equation}\label{eq:finK1}
\begin{aligned}
\ilim_{n\to\infty} &\int_\S f_n (s) \h \{ s\in\S: f_n (s) > -K \} \mu_n (ds)\\
&\ge \int_\S \ilim_{n\to\infty,s^\prime\to s}f_n (s^\prime) \h \{ s^\prime\in\S: f_n (s^\prime) > -K \}\mu (ds).
\end{aligned}
\end{equation}
Here we note that $\{\tilde{\mu}_n\}_{n=1,2,\ldots}\subset\P(\S)$ converges weakly to
$\tilde{\mu}\in\P(\S).$ We remark also that
\begin{equation}\label{eq:finK2}
f_n(s)\h \{ s\in\S: f_n(s)>-K \} \geq f_n(s)
\end{equation}
for all $s\in\S$ because $K>0.$ Thus, \eqref{eq:finK} follows from \eqref{eq:finK1} and \eqref{eq:finK2}.

Inequalities \eqref{eq:lbd:ui} and \eqref{eq:finK} imply
\[
\begin{aligned}
	 \ilim_{n\to\infty} \int_\S f_n (s) &\mu_n (ds)
	  \geq \int_\S\ilim_{n\to\infty,s^\prime\to s}f_n(s^\prime)\mu (ds)\\  & + \lim_{K\to+\infty}\ilim_{n\to\infty} \int_\S f_n (s) \h \{ s\in\S: f_n (s) \leq -K \} \mu_n (ds),
	\end{aligned}
\]
which is equivalent to \eqref{eq:lf} because $\{ f_n^- \}_{n=1,2,\ldots}$ is a.u.i.~w.r.t. $\{\mu_n\}_{n=1,2,\ldots}.$
\end{proof}

\begin{proof}[Proof of Theorem~\ref{thm:var->ui}]
Let Assumption~\ref{as:minorant} hold. According to Theorem~\ref{thm:uiCondEqui},
it is sufficient to prove that $\{f_{n}^-\}_{n=1,2,\ldots}$ is a.u.i.~w.r.t. $\{\mu_n\}_{n=1,2,\ldots},$ that is,
\begin{align}
		\lim_{K\to+\infty} \ilim_{n\to\infty} \int_\SS f_n (s)
		\h \{ s\in\SS : f_n (s) \leq -K\} \mu_n (ds) = 0 .
		\label{eq:tv:laui:lim}
	\end{align}

Let us prove  (\ref{eq:tv:laui:lim}). Indeed,
since $f_n(s)\ge g_n(s),$  %we have that
\[
\h\{s\in\S\,:\, f_n(s)\le -K\}\le \h\{s\in\S\,:\, g_n(s)\le- K\},
 \]
for all $n=1,2,\ldots ,$ $K>0,$ and $s\in\S.$ Therefore,
\begin{equation}\label{eq:maj:eq:1}
\begin{aligned}
\lim_{K\to+\infty} &\ilim_{n\to\infty} \int_\SS f_n (s)
		\h \{ s\in\SS : f_n (s) \leq -K\} \mu_n (ds)\\ &\ge \lim_{K\to+\infty} \ilim_{n\to\infty} \int_\SS g_n (s)
		\h \{ s\in\SS : g_n (s) \leq -K\} \mu_n (ds).
\end{aligned}
\end{equation}
Inequalities \eqref{eq:sw2aaa:var} imply
\begin{equation}\label{eq:maj:eq:2}
\begin{aligned}
\lim_{K\to+\infty} \ilim_{n\to\infty}& \int_\SS g_n (s)
		\h \{ s\in\SS : g_n (s) \leq -K\} \mu_n (ds) \\
\ge \int_\S&\slim_{n\to\infty,s^\prime\to s} g_n(s^\prime)\mu(ds)+ \lim_{K\to+\infty} \ilim_{n\to\infty} \int_\SS -g_n (s)
		\h \{ s\in\SS : g_n (s)> -K\} \mu_n (ds).
\end{aligned}
\end{equation}
Since the functions $\{ g_n \}_{n=1,2,\ldots}$
are bounded from above by the same constant,  Theorem~\ref{thm:Fatou:w:ui}, applied to the sequence of the functions $\{\mathtt{f}_n\}_{n=1,2,\ldots},$ which are uniformly bounded from below, where $\mathtt{f}_n(s):=-g_n (s)
		\h \{ s\in\SS : g_n (s)> -K\},$ $s\in\S,$ $n=1,2,\ldots,$ implies
\begin{equation}\label{eq:maj:eq:3}
\begin{aligned}
\ilim_{n\to\infty} &\int_\SS -g_n (s)
		\h \{ s\in\SS : g_n (s)> -K\} \mu_n (ds)\\ &\ge
-\int_\S\slim_{n\to\infty,s^\prime\to s} g_n (s^\prime)
		\h \{ s^\prime\in\SS : g_n (s^\prime)>- K\}\mu(ds)
\end{aligned}
\end{equation}
for each $K>0.$ If for each $s\in\S$
\begin{equation}\label{eq:maj:eq:4}
\slim_{n\to\infty,s^\prime\to s} g_n (s^\prime)
		\h \{ s^\prime\in\SS : g_n (s^\prime)> -K\}\downarrow\slim_{n\to\infty,s^\prime\to s} g_n (s^\prime)\quad {\rm as}\quad K\uparrow+\infty,
		\end{equation}
 then \eqref{eq:maj:eq:1}--\eqref{eq:maj:eq:3} directly imply \eqref{eq:tv:laui:lim}, that is, $\{ f_n^-\}_{n=1,2,\ldots}$ is a.u.i.~w.r.t. $\{\mu_n\}_{n=1,2,\ldots}.$

Let us prove \eqref{eq:maj:eq:4}. Since for each $s^\prime\in \S$ and $n=1,2,\ldots$
\[
g_n (s^\prime)
		\h \{ s^\prime\in\SS : g_n (s^\prime) >- K\}\downarrow g_n (s^\prime)\quad {\rm as} \quad K\uparrow+\infty,
\]
 we have that
\[
\sup_{m\ge n,s^\prime\in B_\delta(s)}  g_m (s^\prime)
		\h \{ s^\prime\in\SS :  g_m (s^\prime) >- K\}\downarrow\sup_{m\ge n,s^\prime\in B_\delta(s)}  g_m (s^\prime)\quad {\rm as}\quad K\uparrow+\infty
\]
for each $n=1,2,\ldots$ and $\delta>0,$ where $B_\delta(s)$ is the ball in the metric space $\S$ of radius $\delta$ centered at $s.$ Therefore,
\[
\inf_{n\ge1,\delta>0}\sup_{m\ge n,s^\prime\in B_\delta(s)}  g_m (s^\prime)
		\h \{ s^\prime\in\SS : g_m (s^\prime) >- K\} \downarrow \inf_{n\ge1,\delta>0}\sup_{m\ge n,s^\prime\in B_\delta(s)}  g_m (s^\prime)\quad  {\rm as} \quad K\uparrow+\infty,
\]
that is, \eqref{eq:maj:eq:4} holds for each $s\in\S.$ Thus, $\{ f_n^-\}_{n=1,2,\ldots}$ is u.i.~w.r.t. $\{\mu_n\}_{n=1,2,\ldots}.$
\end{proof}

\begin{proof}[Proof of Corollary~\ref{cor:Fatou:w:var}]

Theorem~\ref{thm:Fatou:w:ui}, applied to the sequence $\{f_n-g_n\}_{n=1,2,\ldots}$ of nonnegative functions, implies
\begin{equation}
\begin{aligned}
&\int_\S \ilim\limits_{n\to\infty,\, s'\to s} f_n (s')\mu(ds)-\int_\S \slim\limits_{n\to\infty,\, s'\to s} g_n (s')\mu(ds)  \\
& \le \int_\S \ilim\limits_{n\to\infty,\, s'\to s} (f_n (s')-g_n(s'))\mu(ds)  \le \ilim\limits_{n\to\infty}\int_\S (f_n (s)-g_n (s)) \mu_n (ds)  \label{endprc27} \\
&\le \ilim\limits_{n\to\infty}\int_\S f_n (s)\mu_n (ds)- \ilim\limits_{n\to\infty} \int_\S g_n (s)\mu_n (ds),
	\end{aligned}
\end{equation}
where the first and the third inequalities follow from the basic properties of infimums and supremums.  Inequality \eqref{eq:lf}  follows from \eqref{endprc27} and Assumption~\ref{as:minorant}.
\end{proof}

{\bf Acknowledgements.} The research of the first and the third author was supported by NSF Grant CMMI-1636193.  The authors thank G.~M.~Shevchenko for bringing their attention to M.~V.~Kartashov's book~\cite{Kar08}.

\renewcommand\refname{\center{\normalfont{\small{REFERENCES}}}}

%\newpage

% Appendix - Proofs
\begin{appendices}

\section*{Appendix: Proof of Theorem~\ref{thm:uiCondEqui}}
\addtocounter{section}{1}
\setcounter{equation}{0}
%\label{subsec:proof}
%
This appendix contains the proof of Theorem~\ref{thm:uiCondEqui}.  This proof is close to the proof of the similar result in Kartashov~\cite[p. 180]{Kar08} for the case of a single probability measure.  The main reason for providing this appendix is that reference~\cite{Kar08} may not be available for the majority of the readers. The authors of this note have not been aware about \cite{Kar08} for long time.
After the first version of this note, which contained a direct proof of Theorem~\ref{thm:uiCondEqui}, was posted in arxiv.org, Professor G.M.~Shevchenko informed the authors about the book by M.V.~Kartashov~\cite{Kar08}.

%Now we provide the proof of Theorem~\ref{thm:uiCondEqui}. We begin with the following lemma from Kartashov~\cite{Kar08}. The proof of Lemma~\ref{lm:Kar} is provided for completeness.

\begin{lemma}[{Kartashov~\cite[p.~134]{Kar08}}]\label{lm:Kar}
	Consider a sequence of real-valued functions $\ee_n (K),$ where $K>0,$ such that
	\begin{itemize}
		\item[{\rm (a)}] $\ee_n (K)\downarrow0$ as $K\to+\infty$ for each $n=1,2,\ldots;$ and
		\item[{\rm (b)}] $\lim_{K\to+\infty}\slim_{n\to\infty} \ee_n (K) = 0.$	
	\end{itemize}
	Then {\rm (c)} $\lim_{K\to+\infty}\sup_{n=1,2,\ldots} \ee_n (K) = 0.$	
\end{lemma}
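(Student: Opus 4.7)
The plan is to combine the pointwise decay for each fixed $n$ from hypothesis (a) with the asymptotic uniform control from hypothesis (b), splitting the index set $\{1,2,\ldots\}$ into a finite initial segment and a tail.

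First I would observe that under (a) the numbers $\ee_n(K)$ are automatically nonnegative: a nonincreasing sequence that converges to $0$ must stay $\ge 0$. Consequently $K\mapsto \sup_{n\ge 1}\ee_n(K)$ is itself nonincreasing, so $\lim_{K\to+\infty}\sup_n \ee_n(K)$ exists in $[0,+\infty]$, and it suffices to prove that this limit does not exceed any prescribed $\delta>0$.

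So I would fix $\delta>0$ and use (b) to pick $K_0>0$ large enough that $\slim_{n\to\infty}\ee_n(K_0)<\delta/2.$ By the definition of $\slim$, there exists $N\ge 1$ such that $\ee_n(K_0)<\delta$ for all $n\ge N$. Monotonicity in $K$ from (a) then gives $\ee_n(K)\le \ee_n(K_0)<\delta$ for every $n\ge N$ and every $K\ge K_0$, taking care of the tail uniformly. The remaining indices $n=1,\ldots,N-1$ form only finitely many, and for each of them assumption (a) yields a threshold $K_n$ such that $\ee_n(K)<\delta$ whenever $K\ge K_n$. Setting $K^\ast:=\max\{K_0,K_1,\ldots,K_{N-1}\}$, we obtain $\sup_{n\ge 1}\ee_n(K)\le \delta$ for all $K\ge K^\ast$, from which (c) follows since $\delta$ was arbitrary.

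There is essentially no serious obstacle here: the argument is a standard ``finite exceptional set plus uniform tail'' splitting, and the only point requiring any care is the observation that (a) forces $\ee_n(K)\ge 0$ so that $\sup_n$ and $\slim_n$ interact sensibly, and that $\sup_n \ee_n(K)$ inherits monotonicity in $K$ so its limit as $K\to+\infty$ automatically exists. The deeper content of the appendix is really Theorem~\ref{thm:uiCondEqui} itself, for which this lemma serves as the workhorse: applied to $\ee_n(K):=\int_\SS |f_n|\,\h\{|f_n|\ge K\}\,d\mu_n$ (after possibly dropping the first $N$ terms), it converts the $\slim_n$ in the a.u.i.\ condition into the $\sup_n$ of the u.i.\ condition.
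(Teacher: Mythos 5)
Your proof is correct. It takes the direct route where the paper argues by contradiction: you fix $\delta>0$, use (b) to find $K_0$ and a tail index $N$ with $\ee_n(K_0)<\delta$ for all $n\ge N$, handle the tail uniformly via monotonicity in $K$, and then absorb the finitely many remaining indices $n<N$ using (a); the preliminary observations that (a) forces $\ee_n(K)\ge 0$ and that $\sup_n\ee_n(K)$ is nonincreasing in $K$ (so the limit in (c) exists) are also correct and worth making explicit. The paper instead assumes (c) fails with limit $\delta>0$, extracts for each $m$ an index $n_m$ with $\ee_{n_m}(m)\ge\delta/2$, shows $\{n_m\}$ must be unbounded (a bounded choice would contradict (a)), and then uses monotonicity to get $\slim_{n\to\infty}\ee_n(K)\ge\slim_{m\to\infty}\ee_{n_m}(m)\ge\delta/2$ for every $K$, contradicting (b). The two arguments are contrapositives of one another and rest on the same split between a finite exceptional set (controlled by (a)) and a uniform tail (controlled by (b)); your version is arguably cleaner to read since it avoids the subsequence extraction, while the paper's version isolates exactly where a putative counterexample must live. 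Your closing remark about how the lemma feeds into Theorem~\ref{thm:uiCondEqui} also matches the paper's use of it.
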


\begin{proof}  On the contrary
 assume that {\rm (c)} does not hold.  In this case the limit in {\rm (c)} is  equal to  $\delta$ for some $\delta>0.$  Observe that $\sup_{n=1,2,\ldots} \ee_n (K)\ge {\delta}$ for all $K>0$ since this function does not increase in $K.$   For each $m=1,2,\ldots$ there is a natural number $n_m$ such that $\ee_{n_m} (m) \geq \delta/2 .$ If the sequence $\{n_m\}_{m=1,2,\ldots}$ is bounded, then it is possible to choose $n_m = k$ for some $k\in\{1,2,\ldots\}$ and for an infinite subset of integer numbers $m.$ Therefore, $\ee_k (m) \geq \delta/2 $ for these numbers, which contradicts assumption {\rm (a)}. For $m> K,$ as follows from monotonicity in {\rm (a)},  $\ee_n (K) \geq \ee_n (m),$ which implies
\begin{align}\label{econKAER}
	\slim_{n\to\infty} \ee_n (K) \geq \slim_{m\to\infty} \ee_{n_m} (K) \geq \slim_{m\to\infty} \ee_{n_m} (m)\geq \delta/2 >0,
\end{align}
where $K>0$ is an arbitrary real number, and \eqref{econKAER} contradicts condition {\rm (b)}.
\end{proof}

\begin{proof}[Proof of Theorem~\ref{thm:uiCondEqui}]
The uniform integrability w.r.t. $\{\mu_{n+  N  }\}_{n=1,2,\ldots}$ of the sequence $\{f_{n+  N  }\}_{n=1,2,\ldots}$  for some $  N =0,1,\ldots  $ implies the asymptotic uniform integrability w.r.t. $\{\mu_n\}_{n=1,2,\ldots}$ of the sequence $\{ f_n \}_{n=1,2,\ldots}.$  %because for each sequence $\{a_n\}_{n=1,2,\ldots}\subset \olR$
%its limit superior is no greater than its supremum, that is, (\ref{eq:tv:ui}) implies (\ref{eq:tv:ui:lim}). Let us prove the converse statement: if $\{f_n\}_{n=1,2,\ldots}$ is a.u.i.~w.r.t. $\{\mu_n\}_{n=1,2,\ldots},$ then $\{f_{n+\tilde{N}}\}_{n=1,2,\ldots}$ is u.i.~w.r.t. $\{\mu_{n+\tilde{N}}\}_{n=1,2,\ldots}$ for some $\tilde{N}=1,2,\ldots.$

Vice versa, let $\{f_n\}_{n=1,2,\ldots}$ is a.u.i.~w.r.t. $\{\mu_n\}_{n=1,2,\ldots}.$ Then there exists $  N  =0,1,\ldots  $
such that $f_n\in L^1(\SS;\mu_n)$ for each $n=  N +1,N+2,\ldots\ .  $ Indeed, if there exists a subsequence $\{f_{n_k}\}_{k=1,2,\ldots}$ of $\{f_n\}_{n=1,2,\ldots}$ such that $\int_\SS |f_{n_k}(s)| \mu_{n_k} (ds) = \infty,$ then $\int_\SS |f_{n_k}(s)|\h\{s\in\SS:|f_{n_k}|\geq K \} \mu_{n_k} (ds) = \infty$ for each $K>0$ and $k=1,2,\ldots\ .$ Therefore,
\begin{align*}
\lim_{K\to+\infty} \slim_{n\to\infty} \int_\SS |f_n (s)|
		\h \{ s\in\SS : |f_n (s)| \geq K\} \mu_n (ds) = \infty ,
\end{align*}
which contradicts the assumption that $\{f_n\}_{n=1,2,\ldots}$ is a.u.i.~w.r.t. $\{\mu_n\}_{n=1,2,\ldots}.$

Consider $\ee_n (K) := \int_\SS |f_{  n+N } (s)|\h \{ s\in\SS : |f_{  n+N  } (s)| \geq K\} \mu_{  n+N  } (ds),$ $n=1,2,\ldots\ .$ Since $f_{  n+N  }\in L^1(\SS;\mu_{  n+N  })$ for each $n=1,2,\ldots,$
condition (a) in Lemma~\ref{lm:Kar} holds. Condition (b) in Lemma~\ref{lm:Kar} holds because $\{f_n\}_{n=1,2,\ldots}$ is a.u.i.~w.r.t. $\{\mu_n\}_{n=1,2,\ldots}.$ Then Lemma~\ref{lm:Kar} implies the validity of \eqref{eq:tv:ui} for $\{f_{n+  N  }\}_{n=1,2,\ldots}$ w.r.t. $\{\mu_{n+  N  }\}_{n=1,2,\ldots}.$ % that is,
%$\{f_{n+  N  }\}_{n=1,2,\ldots}$ is u.i.~w.r.t. $\{\mu_{n+  N  }\}_{n=1,2,\ldots}$ for some $  N =0,1,\ldots\ .$ %This completes the proof.
\end{proof}

\end{appendices}

\end{document}